\documentclass[11pt]{amsart}


\usepackage{subfiles}
\usepackage{comment}
\usepackage{float}
\usepackage{marginnote}
\usepackage{tabu}
\usepackage{euscript}
\usepackage[dvipsnames]{xcolor}   		             		
\usepackage{graphicx}			
\usepackage{amssymb}
\usepackage{mathrsfs}
\usepackage{amsthm}
\usepackage{amsmath}
\usepackage{stmaryrd}
\usepackage{tikz}
\usepackage{tikz-cd}
\usepackage{accents}
\usepackage{upgreek}
\usepackage{enumerate}
\usepackage{bm}
\usepackage{mathtools}
\usepackage[all]{xy}
\usepackage{caption}
\usepackage{url}
\usepackage{float}
\usepackage{todonotes} 
\usepackage{colonequals}
\usepackage{bbm}
\usepackage{longtable}
\usepackage[full]{textcomp}
\usepackage[cal=cm]{mathalfa}
\usepackage{xparse}
\usepackage{comment}
\usepackage{cite}

\definecolor{lightyblue}{HTML}{B1B1FB}

\usetikzlibrary{calc}
\usetikzlibrary{fadings}
\usetikzlibrary{decorations.pathmorphing}
\usetikzlibrary{decorations.pathreplacing}
\usepackage{tikz,tikz-cd,tikz-3dplot}
\usepackage{pgfplots}
\usetikzlibrary{arrows,shadows,positioning, calc, decorations.markings, 
hobby,quotes,angles,decorations.pathreplacing,intersections,shapes}
\usepgflibrary{shapes.geometric}
\usetikzlibrary{fillbetween,backgrounds}




\linespread{1}


\setlength{\parskip}{4pt}

\usepackage[margin=1.2in]{geometry}

\setcounter{tocdepth}{1}

\tikzset{
  commutative diagrams/.cd, 
  arrow style=tikz, 
  diagrams={>=stealth}
}
\tikzset{
  arrow/.pic={\path[tips,every arrow/.try,->,>=#1] (0,0) -- +(0,4pt);},
  pics/arrow/.default={triangle 90}
}
\tikzset{->-/.style={decoration={
  markings,
  mark=at position .6 with {\arrow{latex}}},postaction={decorate}}
  }
\tikzset{
  c/.style={every coordinate/.try}
}



\theoremstyle{theorem}

\newenvironment{customthm}[1]
  {\innercustomthm}
  {\endinnercustomthm}
  
\theoremstyle{definition}

  \theoremstyle{definition}

\newenvironment{customquestion}[1]
  {\innercustomconj}
  {\endinnercustomconj}

\makeatletter
\def\@tocline#1#2#3#4#5#6#7{\relax
  \ifnum #1>\c@tocdepth 
  \else
    \par \addpenalty\@secpenalty\addvspace{#2}%
    \begingroup \hyphenpenalty\@M
    \@ifempty{#4}{%
      \@tempdima\csname r@tocindent\number#1\endcsname\relax
    }{%
      \@tempdima#4\relax
    }%
    \parindent\z@ \leftskip#3\relax \advance\leftskip\@tempdima\relax
    \rightskip\@pnumwidth plus4em \parfillskip-\@pnumwidth
    #5\leavevmode\hskip-\@tempdima
      \ifcase #1
       \or\or \hskip 1em \or \hskip 2em \else \hskip 3em \fi%
      #6\nobreak\relax
    \dotfill\hbox to\@pnumwidth{\@tocpagenum{#7}}\par
    \nobreak
    \endgroup
  \fi}
\makeatother

\newcounter{marginnote}
\setcounter{marginnote}{0}

\DeclareMathAlphabet{\mathpzc}{OT1}{pzc}{m}{it}

\usepackage[backref=page]{hyperref}
\hypersetup{
  colorlinks   = true,          
  urlcolor     = blue,          
  linkcolor    = purple,          
  citecolor   = blue             
}

\theoremstyle{theorem}
\newtheorem{theorem}{Theorem}[section]

\newtheorem{corollary}[theorem]{Corollary}
\newtheorem{lemma}[theorem]{Lemma}
\newtheorem{proposition}[theorem]{Proposition}

\theoremstyle{definition}

\newtheorem{remark}[theorem]{Remark}

\newtheorem*{runningexample*}{Running example}

\newtheorem*{aside*}{Aside}

\newtheorem{definition}[theorem]{Definition}
\newtheorem{example}[theorem]{Example}

\newtheorem{notation}[theorem]{Notation}
\newtheorem{proposition-definition}[theorem]{Proposition-Definition}
\newtheorem{question}[theorem]{Question}


\newcommand{\rk}{\operatorname{rk}}

\DeclareMathOperator{\Hom}{Hom}

\DeclareMathOperator{\gp}{gp}
\DeclareMathOperator{\sat}{sat}
\newcommand{\suchthat}{\mathrel{:}}
\newcommand{\RR}{\mathbb{R}}

\newcommand{\sqC}{\scalebox{0.9}[1.3]{$\sqsubset$}}
\newcommand{\tropC}{\sqC}
\newcommand{\ftrop}{\mathsf{f}}

\newcommand{\Gm}{\mathbb{G}_{\operatorname{m}}}

\newcommand{\bcd}{\begin{center}\begin{tikzcd}}
\newcommand{\ecd}{\end{tikzcd}\end{center}}

\newcommand{\Aaff}{\mathbb{A}}

\newcommand{\N}{\mathbb{N}}
\newcommand{\Z}{\mathbb{Z}}
\newcommand{\Q}{\mathbb{Q}}

\newcommand{\R}{\mathbb{R}}

\newcommand{\Speck}{\operatorname{Spec}\kfield}
\newcommand{\kfield}{\Bbbk}

\newcommand{\Acal}{\mathcal{A}}

\newcommand{\Ccal}{\mathcal{C}}

\newcommand{\Log}{\mathsf{Log}}

\newcommand{\Spec}{\operatorname{Spec}}

\makeatletter
\DeclareRobustCommand{\cev}[1]{%
  \mathpalette\do@cev{#1}%
}
\newcommand{\do@cev}[2]{%
  \fix@cev{#1}{+}%
  \reflectbox{$\m@th#1\vec{\reflectbox{$\fix@cev{#1}{-}\m@th#1#2\fix@cev{#1}{+}$}}$}%
  \fix@cev{#1}{-}%
}
\newcommand{\fix@cev}[2]{%
  \ifx#1\displaystyle
    \mkern#23mu
  \else
    \ifx#1\textstyle
      \mkern#23mu
    \else
      \ifx#1\scriptstyle
        \mkern#22mu
      \else
        \mkern#22mu
      \fi
    \fi
  \fi
}


\begin{document}
 
\title{Universality for tropical and logarithmic maps}
\author{Gabriel Corrigan, Navid Nabijou, Dan Simms}
 
\begin{abstract} We prove that every toric monoid appears in a space of maps from tropical curves to an orthant. It follows that spaces of logarithmic maps to Artin fans exhibit arbitrary toric singularities: a virtual universality theorem for logarithmic maps to pairs. The target rank depends on the chosen singularity: we show that the cone over the $7$-gon never appears in a space of maps to a rank-$1$ target. We obtain similar results for tropical maps to affine space.

\end{abstract}

\maketitle
\tableofcontents

\section*{Introduction}

\noindent Mn\"ev universality --- also known as Murphy's or Vakil's law --- asserts that a given class of moduli spaces exhibits arbitrary singularities. It is known to hold in many important cases, including incidence schemes, Hilbert schemes, Chow varieties, and moduli of toric vector bundles \cite{Mnev1,Mnev2,VakilMurphy,LeeVakil,PayneModuliToric,KatzPayneRealization,ErmanHilbert,JelisiejewHilbert,BrandtJonesLeeRanganathan,UtstolNodland,APL,AP1}.

Spaces of stable maps satisfy universality \cite{VakilMurphy}. But while they can be arbitrarily singular, they are always ``virtually'' smooth: a space of stable maps admits a perfect obstruction theory \cite{BehrendFantechi,LiTian} which controls deformations and furnishes the space with many ``virtual'' structures analogous to those found on a smooth variety: fundamental class \cite{BehrendManin}, torus localisation \cite{GraberPandharipande}, Hirzebruch--Riemann--Roch theorem \cite{FantechiGoettsche} and so on. These virtual structures play a central role in Gromov--Witten theory.

Recent years have witnessed the ascendance of logarithmic Gromov--Witten theory and the associated moduli spaces of stable logarithmic maps \cite{GrossSiebertLog,ChenLog,AbramovichChenLog}. This class of moduli spaces includes ordinary stable maps, hence automatically satisfies universality.

On the virtual level, however, there is a crucial difference: the obstruction theory for the space of stable logarithmic maps is defined relative to the space of prestable logarithmic maps to the Artin fan \cite[Proposition~6.3.1]{AbramovichWiseBirational}
\[ \Log(X|D) \to \Log(\Acal_{X|D}).\]
The stack $\Log(\Acal_{X|D})$ is irreducible, so virtual pullback furnishes $\Log(X|D)$ with a virtual fundamental class \cite{ManolachePull}. However $\Log(\Acal_{X|D})$ is \emph{not} in general smooth, or even virtually smooth. As a result, $\Log(X|D)$ does not admit a perfect obstruction theory in the absolute sense. This marks a sharp departure from the theory of ordinary stable maps.

\subsection{Universality} A basic question arises: which singularities does $\Log(\Acal_{X|D})$ exhibit? Equivalently: which ``virtual singularities'' does $\Log(X|D)$ exhibit?

The question is not only of theoretical interest: virtual singularities pose novel technical difficulties --- e.g. for torus localisation \cite{GraberMFOReport} and logarithmic intersection theory \cite{MolchoRanganathan} --- so it is worthwhile investigating their complexity.

Though $\Log(\Acal_{X|D})$ is singular, it carries a logarithmic structure with respect to which it is logarithmically smooth \cite[Proposition~1.6.1]{AbramovichWiseBirational}. It follows that it has at worst toric singularities. The main result of this paper is that it exhibits \emph{arbitrary} toric singularities. This is a (virtual, toric) universality theorem for logarithmic maps.

\begin{customthm}{A}[Theorem~\ref{cor: main cor}] \label{cor: main introduction} Every toric singularity appears in a moduli space of prestable logarithmic maps $\Log(\Acal^n)$, where $\Acal=[\Aaff^{\! 1}/\Gm]$ and $n \in \N$ (depending on the singularity). This phenomenon occurs already for genus zero source curves.
\end{customthm}

The proof is tropical. Given a prestable logarithmic map to $\Acal^n$ the associated discrete data is recorded in a tropical type of map to $\R^n_+$ (Definition~\ref{def: tropical type}). This tropical type defines a toric monoid, whose corresponding affine toric variety gives the local singularity type of $\Log(\Acal^n)$ (Proposition~\ref{prop: representability implies map to Artin fan exists}). Hence Theorem~\ref{cor: main introduction} follows immediately from:

\begin{customthm}{B}[Theorem~\ref{thm: main theorem}] \label{thm: main introduction} Given a toric monoid $P$ there exists an $n \in \N$ and a representable tropical type of map to $\R^n_+$ whose associated tropical monoid is $P$. Moreover we may choose a tropical type with genus zero source curve.\end{customthm} 
To prove this, we start with an arbitrary presentation of the monoid $P$. We then modify it to a presentation which is both ``bipartite'' and ``positive'' (Definitions~\ref{def: bipartite} and \ref{def: positive}). Bipartite means that the generators can be partitioned into two sets
\[ G = G_1 \sqcup G_2 \]
such that elements of $G_1$ only appear on left-hand sides of relations and elements of $G_2$ only appear on right-hand sides. Positive simply means that none of the generators are zero.

After reducing to a bipartite and positive presentation (Proposition~\ref{prop: sanitising presentation}), we then build a tropical type whose shape mirrors the special structure of this presentation (see the proof of Theorem~\ref{thm: main theorem}). The source graph $\Gamma$ consists of two paths, whose edges are indexed by the generators belonging to the two sets $G_1,G_2$. The relations are encoded in a single continuity equation in a high-dimensional tropical target $\R^n_+$. Here $n$ depends on $P$ and is the number of relations in the chosen bipartite and positive presentation.

While we focus on tropical maps to orthants, our techniques also establish universality for tropical maps to affine spaces; see Section~\ref{sec: variants}.

\subsection{Boundedness}
As remarked, in our construction the target rank $n$ depends on the chosen monoid $P$. We do not know whether this dependence is essential:

\begin{customquestion}{C}[Question~\ref{question main}] \label{question introduction} Does there exist a single $n \in \N$ such that every toric monoid appears as the tropical monoid associated to some tropical type of map to $\R^n_+$?
\end{customquestion} 

In Section~\ref{sec: boundedness} we prove that $n=1$ does not suffice:

\begin{customthm}{D}[Theorem~\ref{thm: 7-gon inaccesible}] \label{thm: 7-gon inaccesible introduction} For $k \geqslant 7$ the cone over a $k$-gon does not appear as the tropical monoid associated to any tropical type of map to $\R_+$.
	\end{customthm}
The proof strategy is to bound both the rank and the minimal number of generators of a tropical monoid in terms of the number of vertices of the source graph. This leads to a contradiction when the rank is much smaller than the minimal number of generators, as happens for cones over polygons with large numbers of edges.
	
In the opposite direction we also show (Theorem~\ref{thm: get all 2D cones}) that every rank-$2$ monoid does appear as the monoid associated to a tropical type of map to $\R_+$. The argument is quite subtle, with saturation playing an essential role (see Example~\ref{example get 2D cone}). We hope that the ideas developed there will shed light on Question~\ref{question introduction} for $n \geqslant 2$.

\subsection{Sources and targets} In the enumerative geometry of logarithmic and tropical curves, there is a tension between two types of complexity (see \cite[Section~0.1]{BNR2}):
\begin{itemize}
\item Source complexity: genus of the source curve.
\item Target complexity: rank of the target. 
\end{itemize}
Together our results show that, at least as far as singularities of the moduli space are concerned, target complexity is the more fundamental obstruction: with trivial source genus and arbitrary target rank we obtain all toric monoids (Theorem~\ref{thm: main introduction}), but with arbitrary source genus and trivial target rank we do not (Theorem~\ref{thm: 7-gon inaccesible introduction}).

\subsection{Context} This paper forms part of a broader research direction: investigating the geometry of moduli spaces of logarithmic maps. The present paper focuses on local aspects; for recent progress on global aspects, see \cite{RanganathanSkeletons1,KannanCutPaste,KNSZ}.

Spaces of logarithmic maps form a rich class of modular toroidal embeddings, and as such their geometry is of intrinsic interest. In addition, this direction has produced novel approaches to enumerative problems \cite{MaxContacts,BNR2}.

As already remarked, virtual singularities pose unique technical challenges. In many cases --- e.g. for virtual localisation or applications of Fulton's blowup formula --- these are circumvented, either explicitly or implicitly, by passing to a resolution. Our results imply that, in principle, this step is as complicated as the full toric resolution algorithm.

It is currently unknown whether there exists a general-purpose modular desingularisation of the space of logarithmic maps. For maps to $\Acal$ such a desingularisation is provided by the space of maps to expansions, which is described tropically as the space of image-ordered maps to $\R_+$ (see e.g. \cite[Section~3.1]{BNRGenus1}). For maps to $\Acal^n$ the analogous space depends on combinatorial choices \cite{RangExpansions, MR20} and it is natural to wonder whether there is a choice which produces smooth moduli. It follows from our results that such a putative moduli space would automatically implement resolution of arbitrary toric singularities.

\subsection*{Notation} For a positive integer $n$ we set $[n]\colonequals \{1,\ldots,n\}$. We let $\R^n_+$ denote the rational polyhedral cone $\R^n_{\geqslant 0}$ with $\Z^n$ as the underlying lattice.

To declutter notation, we write $\Log(X|D)$ for a space of stable logarithmic maps and $\Log(\Acal_{X|D})$ for a space of prestable logarithmic maps to the Artin fan. Stability, however, plays no essential role: all results concerning the singularities of moduli spaces apply whether or not we impose stability.

\subsection*{Acknowledgements} We thank Luca~Battistella and Dhruv~Ranganathan for inspiring conversations. G.C. is indebted to Jacqueline Jones for her sustained dedication and support, without whom he would not have had many opportunities for which he is grateful.

This work took place during July--August 2022, as part of the ``Research in the CMS'' undergraduate summer research program at the University of Cambridge. We thank the DPMMS Bursary Fund, the Herchel Smith Fund, and Gonville and Caius College for financial support.

\section{Preliminaries}

\noindent The purpose of this section is to establish conventions and set notation. Detailed treatments of the material can be found in the following references: for monoids, \cite[Part~I]{Ogus}; for logarithmic and tropical maps, \cite[Section~2.5]{AbramovichChenGrossSiebertDegeneration}; for Artin fans, \cite[Section~5]{AbramovichEtAlSkeletons} and \cite{AbramovichWiseBirational}.

\subsection{Presentations of monoids}  A \textbf{monoid} is a set with a single binary operation, satisfying all the axioms of an abelian group except the existence of inverses. A monoid \textbf{presentation} is a pair $(G|R)$ where $G$ is a finite generating set and 
\[ R \subseteq \N^G \times \N^G \]
is a finite relation set. All monoids will be assumed finitely-presented. We often write
\[ (w_1=w_2) \in R\]
to denote the pair $(w_1,w_2) \in R$. The presentation $(G|R)$ defines a monoid $\N^G/R$ as the colimit
\[
\begin{tikzcd}
 \N^R \ar[r,yshift=3pt,"w_1" above] \ar[r,yshift=-3pt,"w_2" below] & \N^G \ar[r,dashed] & \N^G/R.
\end{tikzcd}
\] 
For a discussion of limits and colimits of monoids, see \cite[Chapter I.1.1]{Ogus}. A monoid is \textbf{toric} if it is finitely-generated, torsion-free, integral, saturated, and sharp; for a list of terminology see \cite[Appendix~A]{ChenLog}. Toric monoids are precisely those of the form
\[ P = \upsigma^\vee \cap M \]
where $M$ is a lattice and $\upsigma \subseteq  M_{\R}^\vee$ is a strictly convex rational polyhedral cone of full dimension. The association $P \leftrightarrow \upsigma$ is a duality. Every finitely-generated monoid $P$ admits a \textbf{torification}
\[ P \to P^{\operatorname{tor}} \]
which is the universal toric monoid to which $P$ maps. This construction is functorial, forming a left adjoint to the inclusion of the full subcategory of toric monoids.

\begin{definition} \label{def: toric monoid associated to presentation} Given a monoid presentation $(G|R)$, the \textbf{associated toric monoid} is the torification of $\N^G/R$.
\end{definition}

\begin{remark}\label{rmk: torification}
The precise order of operations for torification is as follows:
\begin{enumerate}
\item \textbf{Integralise and remove torsion.} This is achieved in a single step by replacing the monoid by its image in the torsion-free part of its groupification.
\item \textbf{Saturate.} It follows immediately from the definitions that this preserves integrality and torsion-freeness.
\item \textbf{Sharpen.} This preserves integrality, torsion-freeness and saturatedness by \cite[Propositions~I.1.3.3 and I.1.3.5(4)]{Ogus}.
\end{enumerate}
During this process, saturation is the only step where the minimal number of generators can increase, and sharpening is the only step where the rank of the groupification can change. These facts will be used in Section~\ref{sec: boundedness}.

Removing torsion in (1) is in fact unnecessary, since the sharpening in (3) also guarantees this. We keep the redundancy to align more closely with the existing literature; see Remark~\ref{rmk: toric monoid vs GS monoid}.
\end{remark}

\subsection{Tropical maps} \label{sec: tropical types} We focus on tropical maps to orthants, as these are the tropicalisations of logarithmic maps to pairs; further details can be found in \cite[Section~2.5]{AbramovichChenGrossSiebertDegeneration}. However, our arguments also apply to tropical maps to affine spaces (see Section~\ref{sec: variants}).

Consider the strictly convex rational polyhedral cone
\[ \R^n_+ \colonequals (\upsigma=\RR^n_{\geqslant 0}, N=\Z^n).\]

\begin{definition} \label{def: tropical type} A \textbf{tropical type} of map to $\RR^n_+$ consists of the following data:
\begin{enumerate}
\item A finite connected graph $\Gamma$ consisting of vertices, edges and semi-infinite legs. Each leg $l \in L(\Gamma)$ carries an index label $i_l \in [m]$ and each vertex $v \in V(\Gamma)$ carries genus and multi-degree labels
\[ g_v \in \N, \quad d_v \in \Z^n.\medskip\]
\item To every vertex, edge and leg of $\Gamma$ an associated face of $\RR^n_+$
\[ v \rightsquigarrow \upsigma_v, e \rightsquigarrow \upsigma_e, l \rightsquigarrow \upsigma_l \]
such that if $v \leqslant e$ then $\upsigma_v \leqslant \upsigma_e$ for every vertex $v$ and adjacent edge or leg $e$.\medskip
\item For every oriented edge or leg $\vec{e}$ a slope vector
\[ m_{\vec{e}} \in \Z^n\]
contained in the linear space spanned by $\upsigma_e$ and satisfying $m_{\vec{e}} = -m_{\cev{e}}$.
\end{enumerate}
These data are required to satisfy the balancing condition at each vertex: for every $v \in V(\Gamma)$
\[ d_v = \sum_{v \leqslant \vec{e}} m_{\vec{e}} \]
where the sum is over adjacent edges and legs, all oriented away from $v$. The balancing condition uniquely determines the $d_v$ in terms of the $m_{\vec{e}}$.
\end{definition}

\subsection{Tropical monoids} \label{sec: tropical monoid}
Fix a tropical type $\uptau$. There is an associated \textbf{tropical moduli cone} parametrising maps
\[ \ftrop \colon \tropC \to \R^n_+\]
of type $\uptau$. Here $\tropC$ is a metric enhancement of $\Gamma$ and the map $\ftrop$ is constrained to have slope $m_{\vec{e}}$ along every oriented edge and leg. Following \cite{GrossSiebertLog} we formulate the discussion in terms of the toric monoid dual to the tropical moduli cone.

\begin{notation} Given a set $I \subseteq [n]$ we let
\[ \upsigma_I \leqslant \R^n_+ \]
denote the face spanned by the standard basis vectors $\{e_i \suchthat i \in I\}$. For every vertex $v \in V(\Gamma)$ let $I(v) \subseteq [n]$ denote the set with $\upsigma_{I(v)} = \upsigma_v$ and similarly for edges and legs. 	
\end{notation}

\begin{definition} \label{def: tropical presentation} Given a tropical type $\uptau$, the corresponding \textbf{tropical presentation} $(G_\uptau|R_\uptau)$ is defined as follows. Generators are given by the symbols
\[ G_\uptau \colonequals \{ \ell_e \suchthat e \in E(\Gamma) \} \sqcup \{ \ftrop(v)_i \suchthat v \in V(\Gamma), i \in I(v) \}.\]
Here $\ell_e$ is the length of the finite edge $e$, and the vector $(\ftrop(v)_i \suchthat i \in I(v))$ is the position $\ftrop(v) \in \upsigma_{I(v)}$ of the vertex $v$.

The relations in $R_\uptau$ are indexed by pairs $(\vec{e},i)$ where $\vec{e}$ is an oriented edge and $i \in I(e)$. The associated relation is
\begin{equation} \label{eqn: monoid relation} \ftrop(v_2)_i = \ftrop(v_1)_i + (m_{\vec{e}})_i \, \ell_e \end{equation}
where $\vec{e}$ starts at $v_1$ and ends at $v_2$. Here $(m_{\vec{e}})_i$ is the component of the vector $m_{\vec{e}} \in \Z^n$ in the direction $i \in I(e) \subseteq [n]$. This relation imposes that $\ftrop$ has slope $m_{\vec{e}}$ along $\vec{e}$. 

Note that $(m_{\vec{e}})_i \in \Z$, whilst $\ftrop(v_2)_i, \ftrop(v_1)_i, \ell_e \in G_\uptau$ are all generators. If $(m_{\vec{e}})_i < 0$ then we rearrange the relation so that all coefficients are non-negative. If it happens that the containment $\upsigma_{v_j} \leqslant \upsigma_e$ is strict, then for $i \in I(e) \setminus I(v_j)$ we replace $\ftrop(v_j)_i$ by $0$ in \eqref{eqn: monoid relation}. 
\end{definition}

\begin{definition} \label{def: tropical monoid} The \textbf{tropical monoid} $P_\uptau$ is the toric monoid associated to the tropical presentation $(G_\uptau|R_\uptau)$, in the sense of Definition~\ref{def: toric monoid associated to presentation}.
\end{definition}

\begin{remark} \label{rmk: toric monoid vs GS monoid} The tropical monoid defined above is the sharpening of the basic monoid defined in \cite[Construction~1.16]{GrossSiebertLog}. In Gross--Siebert's construction, saturating the relations amounts to taking the torsion-free part of the monoid, while quotienting the groupification and taking the image of the prequotient amounts to integralising. Note that Gross--Siebert's basic monoid is not always sharp, though it is when the tropical type is representable (see \cite[Proposition~1.19]{GrossSiebertLog} and Lemma~\ref{lem: sharpening unnecessary} below). For Theorem~\ref{thm: main introduction} we are only interested in monoid presentations $(G|R)$ such that $\N^G/R$ is sharp, so whether or not we sharpen at the end is unimportant.
\end{remark}

\subsection{Representability} \label{sec: representability} Tropical types record the combinatorial data of a tropical map. However, not all tropical types arise in this way.

\begin{definition}
A tropical type $\uptau$ is \textbf{representable} if there exists a map
\[ \ftrop \colon \sqC \to \R^n_+\]
where $\sqC$ is a metric enhancement of $\Gamma$ with strictly positive edge lengths, and $\ftrop$ is a map with slope $m_{\vec{e}}$ along every oriented edge and leg, such that
\[ \ftrop(\operatorname{RelInt}(p)) \subseteq \operatorname{RelInt}(\upsigma_p)\]
for all polyhedra $p \in V(\Gamma) \sqcup E(\Gamma) \sqcup L(\Gamma)$.
\end{definition}

\begin{remark} It is essential to impose that all edge lengths are strictly positive and that the relative interior of $p$ is mapped to the relative interior of $\upsigma_p$. Otherwise, every tropical type would be trivially representable, by setting all edge lengths to zero and letting $\ftrop$ be the map collapsing $\sqC$ to the point $0 \in \R^n_+$.
\end{remark}

\begin{remark} Representability is referred to as smoothability in \cite[Section~3.4]{BNR2}. This notion should not be confused with realisability, which is typically a condition on individual tropical maps rather than tropical types (see e.g. \cite{Speyer,RSW2}).
\end{remark}

\begin{lemma} \label{lem: representable iff element of dual cone} A tropical type is representable if and only if there exists a monoid morphism 
\[ u \colon P_\uptau \to \RR_+\] 
which is non-zero on the generators $\ell_e$ and $\ftrop(v)_i$ appearing in Definition~\ref{def: tropical presentation}.
\end{lemma}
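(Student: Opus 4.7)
The plan is to build a direct bijection between representations $\ftrop \colon \sqC \to \R^n_+$ of $\uptau$ and monoid morphisms $u \colon P_\uptau \to \R_+$ which are non-zero on the generators listed in Definition~\ref{def: tropical presentation}. Under the correspondence, the length of each metric edge $e \in \sqC$ is matched with $u(\ell_e)$, and the $i$-th coordinate of the vertex position $\ftrop(v) \in \upsigma_v$ (for $i \in I(v)$) with $u(\ftrop(v)_i)$. The relations in $R_\uptau$ are exactly the slope equations in Definition~\ref{def: tropical presentation}, so in both directions the argument amounts to checking that one packet of data reads off the other.

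For the forward direction, given $\ftrop$ I define $u$ on generators via the above assignments. Piecewise linearity of $\ftrop$ with constant slope $m_{\vec e}$ along each oriented edge means the defining equations of $R_\uptau$ hold as equalities in $\R_+$, so $u$ descends to a morphism $\N^{G_\uptau}/R_\uptau \to \R_+$. Because $\R_+$ is itself a toric monoid, this factors uniquely through the torification $P_\uptau$. Strict positivity of edge lengths and the relative interior requirement at each vertex force $u$ to be non-zero on every listed generator.

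For the converse, given such a $u$ I declare each edge $e$ to have length $u(\ell_e) > 0$, place each vertex $v$ at the point of $\upsigma_v$ with $i$-th coordinate $u(\ftrop(v)_i)$ for $i \in I(v)$ and $0$ elsewhere, and extend along each edge by the linear interpolation with slope $m_{\vec e}$. The relations in $R_\uptau$, which $u$ respects by hypothesis, guarantee that this linear extension matches at the far endpoint; legs are handled identically, extending to infinity.

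The main obstacle is verifying the relative interior conditions $\ftrop(\operatorname{RelInt}(p)) \subseteq \operatorname{RelInt}(\upsigma_p)$. Vertices are immediate from non-vanishing of $u$. For an edge $e$ between vertices $v_1, v_2$, a coordinate $i \notin I(e)$ remains $0$ along $e$ because the containment $I(v_j) \subseteq I(e)$ forces both endpoint values to be $0$ and $m_{\vec e}$ lies in the span of $\upsigma_e$. For $i \in I(e)$, the coordinate is a linear function of arclength whose endpoint values are $u(\ftrop(v_j)_i) > 0$ when $i \in I(v_j)$ and $0$ otherwise; a short case split on the partition of $i$ between $I(v_1)$ and $I(v_2)$ yields strict positivity on the open edge whenever at least one endpoint value is positive. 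The delicate subcase is $i \in I(e) \setminus (I(v_1) \cup I(v_2))$: the relation of Definition~\ref{def: tropical presentation} then reads $(m_{\vec e})_i \, \ell_e = 0$ in $P_\uptau$, and sharpness of $P_\uptau$ together with $u(\ell_e) > 0$ forces $(m_{\vec e})_i = 0$, which under the standing convention that $\upsigma_e$ is minimal among faces containing the image of $e$ rules out this case.
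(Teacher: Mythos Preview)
Your argument is correct and follows the same strategy as the paper's: both identify $\Hom(P_\uptau,\R_+)$ with the tropical moduli cone and observe that non-vanishing on the listed generators is exactly the relative-interior condition for representability. Your version is considerably more explicit---the paper compresses the entire equivalence into one sentence and does not carry out the edge-by-edge case analysis you give.

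One caveat on your delicate subcase $i \in I(e)\setminus(I(v_1)\cup I(v_2))$: you correctly deduce $(m_{\vec e})_i = 0$ (this follows directly from applying $u$ to the relation and using $u(\ell_e)>0$, rather than from sharpness as such), and then appeal to a ``standing convention that $\upsigma_e$ is minimal among faces containing the image of $e$.'' No such convention is stated in Definition~\ref{def: tropical type}, and without it the lemma fails in this corner case (take $n=1$, a single edge with both endpoint cones $0$, edge cone $\R_+$, and slope $0$). The paper's proof simply does not address this point, so the gap is in the paper's setup rather than in your reasoning; in practice tropical types arising from logarithmic maps always satisfy this minimality.
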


\begin{proof} The given presentation of $P_\uptau$ gives an inclusion of rational polyhedral cones
\[ \Hom(P_\uptau, \R_+) \hookrightarrow \prod_{e \in E(\Gamma)} \RR_+ \ell_e^\vee \times \prod_{v \in V(\Gamma)} \prod_{i \in I(v)} \RR_+ \ftrop(v)_i^\vee \]
which realises $\Hom(P_\uptau,\R_+)$ as the tropical moduli cone associated to $\uptau$. Every 
\[ u \in \Hom(P_\uptau,\R_+)\]
thus determines a metric enhancement of $\Gamma$ together with a map $\ftrop$ to $\R^n_+$. The condition that $u$ is non-zero on generators is then precisely the condition that the edge lengths are strictly positive and that $\ftrop$ maps the relative interior of every $p$ into the relative interior of the associated cone $\upsigma_p$.
\end{proof}

\subsection{Logarithmic maps and Artin fans} \label{sec: logarithmic maps} Let $(X|D)$ be a normal crossings pair. This means that, \'etale locally, the pair $(X|D)$ is isomorphic to affine space equipped with a collection of coordinate hyperplanes. The Artin fan
\[ \Acal_{X|D} \]
is a smooth toric Artin stack of pure dimension zero. There is a smooth morphism $X \to \Acal_{X|D}$ which identifies the strata of $(X|D)$ with the toric strata of $\Acal_{X|D}$. The Artin fan has an open cover by stacks of the form $\Acal^n$ where
\[ \Acal \colonequals [\Aaff^{\!1}/\Gm]. \]
There is a pushforward morphism on spaces of logarithmic maps
\[ \Log(X|D) \to \Log(\Acal_{X|D}) \]
where $\Acal_{X|D}$ carries the toric logarithmic structure. This morphism is equipped with a relative perfect obstruction theory. This identifies the virtual singularities of $\Log(X|D)$ with the actual singularities of $\Log(\Acal_{X|D})$. The latter are controlled by tropical combinatorics:

\begin{proposition} \label{prop: representability implies map to Artin fan exists} 
Let $\uptau$ be a tropical type of map to $\RR^n_+$. If $\uptau$ is representable, there exists a logarithmic map to the Artin fan
\[ \Ccal \to \Acal^n\]
whose tropicalisation has type $\uptau$. In a neighbourhood of this map, the moduli space $\Log(\Acal^n)$ has singularity type $\Spec \kfield[P_\uptau]$ where $P_\uptau$ is the tropical monoid corresponding to $\uptau$.
\end{proposition}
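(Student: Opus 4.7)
The plan is to reduce the proposition to the Gross--Siebert theory of basic monoids, using representability to ensure that we land in the correct stratum.

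First I would use Lemma~\ref{lem: representable iff element of dual cone} to produce a monoid morphism $u \colon P_\uptau \to \R_+$ that is non-zero on all generators, then construct a canonical family of logarithmic maps to $\Acal^n$ over the log scheme $(\Spec \kfield[P_\uptau], P_\uptau)$. The source of this family is a family of nodal curves with dual graph $\Gamma$, whose smoothing parameter at the node corresponding to $e \in E(\Gamma)$ is the generator $\ell_e \in P_\uptau$; the marked points correspond to the legs of $\Gamma$. Each of the $n$ factors of $\Acal^n = [\Aaff^{\!1}/\Gm]^n$ is defined by a line bundle equipped with a section, which is assembled from the vertex position data $\ftrop(v)_i$ (together with the edge slope data $m_{\vec{e}}$) via the standard recipe for log maps to $\Acal$. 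The relations \eqref{eqn: monoid relation} in the tropical presentation are precisely the compatibility conditions needed for these local pieces to glue into a global morphism to $\Acal^n$. Specialising this family along $u$ gives the desired logarithmic map $\Ccal \to \Acal^n$ over a geometric point; since $u$ is non-zero on generators, no node is smoothed and no vertex is pushed into a smaller face, so the tropicalisation realises $\uptau$ exactly and not merely a coarser degeneration thereof.

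For the singularity statement, I would invoke the universal property of the basic monoid from \cite[Proposition~1.19]{GrossSiebertLog}: deformations of a prestable logarithmic map with fixed tropical type $\uptau$ are locally universally parametrised by $\Spec \kfield[P^{\operatorname{GS}}_\uptau]$, where $P^{\operatorname{GS}}_\uptau$ is the Gross--Siebert basic monoid. By Remark~\ref{rmk: toric monoid vs GS monoid} this basic monoid agrees with $P_\uptau$ after sharpening, and in the representable case sharpening does nothing (as will be shown in Lemma~\ref{lem: sharpening unnecessary}). Since $\Acal^n$ is a smooth Artin stack of pure dimension zero, it carries no algebro-geometric deformations beyond the logarithmic/combinatorial ones, so the local structure of $\Log(\Acal^n)$ at our map is governed entirely by the basic monoid: étale-locally, $\Log(\Acal^n) \cong \Spec \kfield[P_\uptau]$.

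The main obstacle is bookkeeping rather than conceptual: one must check carefully that the universal property of the basic monoid persists when the target is an Artin stack with logarithmic structure rather than a scheme. This is in fact cleaner here than in \cite{GrossSiebertLog}, since $\Acal^n$ is purely toric, so its deformation theory is entirely combinatorial. The role of the representability hypothesis is to guarantee that the map $\Spec \kfield[P_\uptau] \to \Log(\Acal^n)$ meets the open stratum parametrising maps of type exactly $\uptau$; without this, the tropicalisation of the map constructed in the first paragraph could degenerate, and the identification of the local singularity type would refer to the wrong monoid.
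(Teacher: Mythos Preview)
Your proposal is correct and follows the same arc as the paper's proof: build a logarithmic curve with dual graph $\Gamma$ over a base carrying the monoid $P_\uptau$, construct the map to $\Acal^n$ from the tropical data, and read off the local singularity type from the Gross--Siebert chart. The one simplification worth noting is that the paper bypasses the line-bundle bookkeeping you flag as the main obstacle: rather than assembling the map $\Ccal \to \Acal^n$ algebraically, it invokes the equivalence between logarithmic morphisms $\Ccal \to \Acal^n$ and cone complex morphisms $\sqC \to \R^n_+$ (\cite[Proposition~2.10]{AbramovichChenGrossSiebertDegeneration}) and builds the latter fibrewise over $\Hom(P_\uptau,\R_+)$ directly from the edge-length and vertex-position generators. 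The paper also works throughout over the logarithmic point $(\Speck,P_\uptau)$ rather than over $\Spec\kfield[P_\uptau]$, so no specialisation along $u$ is needed; representability enters only to ensure that the morphism $\N^{E(\Gamma)} \to P_\uptau$ is sharp and hence lifts to a map of log points. For the singularity statement the paper cites logarithmic smoothness of $\Log(\Acal^n)$ \cite[Proposition~1.6.1]{AbramovichWiseBirational} and identifies the chart monoid, which is equivalent to your appeal to the universal property of the basic monoid.
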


\begin{proof}
Let $\Gamma$ be the dual graph encoded in the tropical type $\uptau$. In the tropical presentation $(G_\uptau|R_\uptau)$ given in Definition~\ref{def: tropical presentation}, there is a natural inclusion $E(\Gamma) \subseteq G_\uptau$ as the edge length generators $\ell_e$. This gives rise to a homomorphism
\[ v \colon \N^{E(\Gamma)} \to P_\uptau.\]
By Lemma~\ref{lem: representable iff element of dual cone} and the representability assumption it follows that $v$ is sharp, i.e. we have $v^{-1}(0)=0$. Sharpness is a necessary and sufficient condition for a morphism of toric monoids to lift to a morphism of logarithmic points. Hence we may choose an arbitrary such lift
\begin{equation} \label{eqn: map of log points} (\Speck, P_\uptau) \to \left(\Speck, \N^{E(\Gamma)}\right). \end{equation}
Now take $C$ to be any algebraic curve with dual graph $\Gamma$. The logarithmic structure on the space of prestable curves produces a universal enhancement of $C$ to a logarithmic curve over the logarithmic point
\[\left(\Speck, \N^{E(\Gamma)}\right).\]
Pulling back along \eqref{eqn: map of log points} then produces a logarithmic curve
\[
\begin{tikzcd}
\Ccal \ar[d] \\
(\Speck,P_\uptau)
\end{tikzcd}
\]
which tropicalises to a family of tropical curves, which is a morphism of cone complexes
\begin{equation} \label{eqn: family of tropical curves}
\begin{tikzcd}
\sqC \ar[d] \\
\Hom(P_\uptau,\RR_{+}).	
\end{tikzcd}
\end{equation}
It is well-known (see e.g. \cite[Proposition~2.10]{AbramovichChenGrossSiebertDegeneration}) that logarithmic morphisms $\Ccal \to \Acal^n$ correspond to cone complex morphisms $\sqC \to \RR^n_+$. We construct the latter directly, working fibrewise with respect to \eqref{eqn: family of tropical curves}. By the definition of the generators of $P_\uptau$ a point
\[ u \in \Hom(P_\uptau,\R_+) \]
defines edge lengths $\ell_e \geqslant 0$ for $e \in E(\Gamma)$ and vertex positions $\ftrop(v) \in \upsigma_v$ for $v \in V(\Gamma)$. This defines the morphism $\sqC_u \to \R^n_+$ on the fibre over $u$. The relations in the definition of $P_\uptau$ ensure that this map has integral slopes along edges, which in turn ensures that these fibrewise morphisms glue to a global morphism of cone complexes. This produces the desired diagram
\[ 
\begin{tikzcd}
\sqC \ar[r] \ar[d] & \R^n_+\\
\Hom(P_\uptau,\R_+).	
\end{tikzcd}
\]
We have thus shown that there exists a logarithmic map to $\Acal^n$ with tropical type $\uptau$.

The space $\Log(\Acal^n)$ is logarithmically smooth over the trivial logarithmic point \cite[Proposition~1.6.1]{AbramovichWiseBirational}. Hence its singularities are governed by its charts, which are given by the tropical monoids $P_\uptau$ (see \cite[Definition~1.20]{GrossSiebertLog} and Remark~\ref{rmk: toric monoid vs GS monoid}).
\end{proof}

\begin{remark} More generally, in Proposition~\ref{prop: representability implies map to Artin fan exists} we may replace $\R^n_+$ by any cone complex $\Sigma$ and $\Acal^n$ by the Artin fan $\Acal_\Sigma$.
\end{remark}

\section{Universality}

\noindent Given a toric monoid $P$ we wish to construct a representable tropical type $\uptau$ with $P_\uptau=P$. The strategy is to choose a special presentation $(G|R)$ of $P$ and to construct $\uptau$ from the combinatorics of this presentation. 

We begin in Section~\ref{sec: presentation surgery} by showing that we can always find a presentation which is both ``bipartite'' and ``positive''. Then in Section~\ref{sec: main theorem} we give the main construction, building a tropical type which mirrors the structure of this special presentation. In Section~\ref{sec: singularities of Artin fan} we present applications to singularities of spaces of logarithmic maps, and finally in Section~\ref{sec: variants} we adapt our construction to deal with tropical maps to affine spaces.

\subsection{Presentation surgery} \label{sec: presentation surgery} We show that every monoid presentation can be replaced by one of a very specific form.

\begin{definition} \label{def: bipartite} A presentation $(G|R)$ is \textbf{bipartite} if there exists a partition of the generators
\[ G = G_1 \sqcup G_2 \]
such that every relation in $R$ is of the form $(w_1=w_2)$ where $w_1$ is a word in the elements of $G_1$ and $w_2$ is a word in the elements of $G_2$.
\end{definition}

\begin{definition} \label{def: positive} A presentation $(G|R)$ is \textbf{positive} if under the quotient homomorphism
\[ \uppi \colon \N^G \to \N^G/R \]
we have $\uppi(g) \neq 0$ for all $g \in G$, where by abuse of notation $g \in \N^G$ denotes the associated standard generator.	
\end{definition}

\begin{proposition} \label{prop: sanitising presentation} Every presentation $(G|R)$ can be replaced by a presentation $(G^\prime | R^\prime)$ which is bipartite and positive, and induces the same monoid:
\[ \N^{G^\prime}/R^\prime = \N^G/R.\]
\end{proposition}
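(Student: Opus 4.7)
The plan is to handle the two conditions separately: first I would produce an equivalent positive presentation, and then convert that into a bipartite one without disturbing positivity.

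For the positivity step, write $\uppi \colon \N^G \to \N^G/R$ for the quotient map and let $Z \colonequals \{g \in G : \uppi(g) = 0\}$ be the set of ``bad'' generators. I would simply delete them, forming a new presentation $(G \setminus Z \mid R|_{G \setminus Z})$ in which each relation $(w_1 = w_2) \in R$ is rewritten by substituting $0$ for every letter belonging to $Z$. To verify $\N^{G \setminus Z}/R|_{G \setminus Z} \cong \N^G/R$, I would construct mutually inverse monoid maps: the one induced from right to left by the inclusion $G \setminus Z \hookrightarrow G$, and a retraction from left to right sending each generator in $Z$ to $0$. Both are well-defined because $\uppi$ already kills $Z$. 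The resulting presentation is positive by construction.

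For the bipartiteness step, starting from a positive presentation $(G''|R'')$ I would duplicate every generator. Let $G' \colonequals G'' \sqcup (G'')^{\mathrm{prime}}$, where $(G'')^{\mathrm{prime}} = \{g' : g \in G''\}$ is a disjoint copy, and take $R'$ to consist of (i) the identification relations $g = g'$ for each $g \in G''$, together with (ii) each original relation $(w_1 = w_2) \in R''$ rewritten as $(w_1 = w_2')$, where $w_2'$ is obtained by priming every letter of $w_2$. The partition $G_1 \colonequals G''$, $G_2 \colonequals (G'')^{\mathrm{prime}}$ then manifestly witnesses bipartiteness: type-(i) relations are a single $G_1$-letter equated with a single $G_2$-letter, while type-(ii) relations have $w_1 \in \N^{G_1}$ on the left and $w_2' \in \N^{G_2}$ on the right. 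Equality of monoids is clear, as each primed generator is an auxiliary symbol identified with an existing one. Positivity persists since $\uppi'(g') = \uppi'(g) \neq 0$ for each $g \in G''$.

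Neither step involves particularly deep ideas, but the main point requiring care is the monoid isomorphism in the positivity step. Specifically, I need to check that the retraction $\N^G/R \to \N^{G \setminus Z}/R|_{G \setminus Z}$ is well-defined, which amounts to observing that each relation $(w_1 = w_2) \in R$, after truncating its $Z$-letters, becomes by definition a relation in $R|_{G \setminus Z}$; conversely, every relation in $R|_{G \setminus Z}$ is automatically valid in $\N^G/R$ because $\uppi$ already sends $Z$ to zero. The bipartiteness step, by contrast, is essentially a routine duplication argument.
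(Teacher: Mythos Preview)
Your proposal is correct and uses essentially the same two ingredients as the paper: duplicate each generator (adding the synchronising relations $g=g'$) to obtain bipartiteness, and delete the generators with $\uppi(g)=0$ to obtain positivity. The differences are minor. You perform the steps in the opposite order (positive first, then bipartite), which works equally well and makes the ``positivity persists'' check a one-liner; the paper instead must note that deleting zero generators respects the bipartite partition. You also verify the monoid isomorphism in the positivity step by exhibiting mutually inverse maps via the universal property of the coequaliser, which is arguably cleaner than the paper's explicit chain argument invoking \cite[Proposition~I.1.1.3(2)]{Ogus}. One cosmetic slip: in your first paragraph the directions of the two maps are swapped --- the inclusion $G\setminus Z \hookrightarrow G$ induces the map \emph{from} $\N^{G\setminus Z}/R|_{G\setminus Z}$ \emph{to} $\N^G/R$, and the retraction killing $Z$ goes the other way --- though your final paragraph has this correct.
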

\begin{proof} For each $g \in G$ introduce two symbols $g_1,g_2$ and define
\[ G_1 \colonequals \{ g_1 \suchthat g \in G \}, \quad G_2 \colonequals \{ g_2 \suchthat g \in G\}.\]
Set $G^\prime=G_1 \sqcup G_2$. For every relation in $R$ replace each appearance of $g$ on the left-hand side by $g_1$ and each appearance of $g$ on the right-hand side by $g_2$. Let $R^\prime$ be the set of such relations, together with the relations $g_1=g_2$ for all $g \in G$. The presentation $(G^\prime | R^\prime)$ is then bipartite and satisfies
\[ \N^{G^\prime}/ R^\prime = \N^G / R.\]
It remains to replace an arbitrary bipartite presentation $(G|R)$ with a bipartite and positive presentation $(G^\prime | R^\prime)$. Let
\[ G_0 = \{ g \in G \suchthat \uppi(g) = 0\} \]
and set $G^\prime = G \setminus G_0$. Define $R^\prime$ by taking each relation in $R$ and removing all appearances of elements $g \in G_0$. The presentation $(G^\prime | R^\prime)$ is still bipartite. To show that it induces the same monoid, consider the composite
\[ \N^{G^\prime} \hookrightarrow \N^G \xrightarrow{\uppi} \N^G/R.\]
Since $\uppi(g)=0$ for $g \in G_0$ it follows that the map $\N^{G^\prime} \to \N^G/R$ is surjective.

We now show that this map respects $R^\prime$. Given $(w_1,w_2) \in R^\prime$ its image in $\N^G \times \N^G$ belongs to $R$ up to adding elements of $G_0$ to both sides. Hence $w_1$ and $w_2$ map to the same element of $\N^G/R$ and so the map $\N^{G^\prime} \to \N^G/R$ respects $R^\prime$. It follows that there is a surjective homomorphism
\begin{equation} \label{eqn: map between different presentation quotients} \N^{G^\prime}/R^\prime \to \N^G/R.\end{equation}
It remains to show that it is injective. Consider $u,v \in \N^{G^\prime}$ mapped to the same point in $\N^G/R$. By \cite[Proposition~I.1.1.3(2)]{Ogus} this means there there exists a sequence
\[ p_0,\ldots,p_n \in \N^G \]
such that $p_0=u,p_n=v$ and for $i \in [n]$ either $(p_{i-1},p_{i})$ or $(p_{i},p_{i-1})$ belongs to the set
\[ R_+ = \left\{ (w_1+c,w_2+c) \suchthat (w_1,w_2) \in R, c \in \N^G \right\}.\]
For $p \in \N^G$ let $p^\prime \in \N^{G^\prime}$ denote the image under the projection. Then $p_0^\prime=u^\prime=u, p_n^\prime=v^\prime=v$ and if $(p_{i-1},p_{i}) = (w_1+c,w_2+c) \in R_+$ then
\[ (p_{i-1}^\prime,p_{i}^\prime) = (w_1^\prime + c^\prime, w_2^\prime + c^\prime) \in R^\prime_+\]
and similarly for $(p_i,p_{i-1})$. Hence the sequence $p_0^\prime,\ldots,p_n^\prime \in \N^{G^\prime}$ witnesses the identity $u=v$ in $\N^{G^\prime}/R^\prime$. We conclude that \eqref{eqn: map between different presentation quotients} is injective, and hence there is an isomorphism
\begin{equation}  \N^{G^\prime}/R^\prime \cong \N^G/R.\end{equation}
This isomorphism fits into a commuting square
\[
\begin{tikzcd}
\N^{G^\prime} \ar[r,hook] \ar[d,"\uppi^\prime"] & \N^G \ar[d,"\uppi"] \\
\N^{G^\prime}/R^\prime \ar[r,"\cong"] & \N^G/R
\end{tikzcd}
\]
from which it follows that $\uppi^\prime(g) \neq 0$ for all $g \in G^\prime$, so $(G^\prime | R^\prime)$ is positive, as required.
\end{proof}

\subsection{Main construction} \label{sec: main theorem} We come to the main result.

\begin{theorem}[Theorem~\ref{thm: main introduction}] \label{thm: main theorem} Let $P$ be an arbitrary toric monoid. Then there exists an $n \in \N$ (depending on $P$) and a representable tropical type $\uptau$ of map to $\R^n_{+}$ such that 
\[ P = P_\uptau. \]
Moreover the tropical type can be chosen with $g(\Gamma)=0$.
\end{theorem}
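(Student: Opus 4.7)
The plan is to leverage Proposition~\ref{prop: sanitising presentation} and reduce to a bipartite positive presentation $(G = G_1 \sqcup G_2 \mid R)$ of $P$, then hand-craft a tropical type $\uptau$ whose shape faithfully mirrors that presentation. The target will be $\R^n_+$ with $n = |R|$, and the source graph $\Gamma$ will be a tree consisting of two paths emanating from a central vertex, one path per half of the bipartition.

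Write $G_1 = \{a_1, \ldots, a_{k_1}\}$, $G_2 = \{b_1, \ldots, b_{k_2}\}$, and expand each relation as $r_j\colon w_1^j = w_2^j$ with $w_1^j = \sum_i c_{j,i}^1 a_i$ and $w_2^j = \sum_i c_{j,i}^2 b_i$. Since $P$ is toric and hence sharp, positivity forces both $w_1^j$ and $w_2^j$ to be nonempty: a relation $0 = w$ with $w$ nonempty would assert that a sum of nonzero generators vanishes, contradicting sharpness. The source $\Gamma$ has a central vertex $v_0$ together with two paths $v_0 \to v_1^1 \to \cdots \to v_{k_1}^1$ and $v_0 \to v_1^2 \to \cdots \to v_{k_2}^2$ whose $i$-th edge $e_i^s$ (for $s \in \{1,2\}$) represents the generator $a_i$ or $b_i$. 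Orient every edge away from $v_0$ and define slopes $(m_{e_i^s})_j := -c_{j,i}^s$. For the cone data, place both leaves $v_{k_1}^1$ and $v_{k_2}^2$ at the origin, set $\upsigma_{v_0} := \R^n_+$, and using the descending sets
\[ J_i^s := \{j \in [n] : c_{j,l}^s > 0 \text{ for some } l > i\} \]
assign $\upsigma_{v_i^s} := \upsigma_{J_i^s}$ and $\upsigma_{e_i^s} := \upsigma_{J_{i-1}^s}$. Set all vertex genera to zero and let balancing fix the degrees $d_v$.

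Iterating the edge relations inward from the origin-leaves yields the identity
\[ \ftrop(v_i^s)_j = \sum_{l > i} c_{j,l}^s \ell_{e_l^s} \]
in $\N^{G_\uptau}/R_\uptau$; at the central vertex this produces two expressions $\ftrop(v_0)_j = w_1^j(\ell^1) = w_2^j(\ell^2)$. Using these identities to eliminate every vertex-position generator leaves precisely the relations $w_1^j = w_2^j$ among the edge-length generators $\ell_{e_i^s}$, identifying $\N^{G_\uptau}/R_\uptau$ with $\N^G/R = P$. Since $P$ is already toric, torification is trivial and $P_\uptau = P$. For representability (Lemma~\ref{lem: representable iff element of dual cone}) I would select an interior point $u$ of the dual cone $\Hom(P,\R_+)$, which exists because $P$ is sharp and toric, and set $\ell_{e_i^1} := u(a_i)$, $\ell_{e_i^2} := u(b_i)$; the required positivity $u(\ftrop(v_i^s)_j) > 0$ for $j \in I(v_i^s)$ then reduces to the tautology $j \in J_i^s \iff \sum_{l > i} c_{j,l}^s > 0$. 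Finally, $\Gamma$ is a tree with genus-zero vertex labels, so $g(\Gamma) = 0$.

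The main subtlety is the cone assignment. A first attempt placing every vertex in the interior of $\R^n_+$ fails representability whenever some generator contributes to only a proper subset of the relations: the partial tail sums defining the positions $\ftrop(v_i^s)_j$ then vanish in certain coordinates, pushing the vertex onto a proper face of $\R^n_+$. The descending family $J_i^s$ isolates exactly the coordinate set that remains alive at $v_i^s$, so the cone data is essentially forced on us. Once this is in place, bipartiteness and positivity ensure that every face containment and slope-span constraint of Definition~\ref{def: tropical type} is automatic and that no source genus needs to be introduced.
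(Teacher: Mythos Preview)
Your proof is correct and follows essentially the same approach as the paper. Both arguments reduce via Proposition~\ref{prop: sanitising presentation} to a bipartite positive presentation, build the same two-path tree with $n=|R|$, and encode each relation as a coordinate of the slope vectors; your explicit ``descending set'' description $J_i^s$ of the vertex cones is equivalent to the paper's recursive flow definition (up to reversing the edge labelling along each path), and your elimination of the vertex-position generators makes explicit what the paper summarises as ``by design, $(G_\uptau|R_\uptau)$ coincides with $(G|R)$''. The only cosmetic difference is orientation: you orient edges outward from $v_0$ with negative slopes, while the paper orients inward with positive slopes.
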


\begin{proof} By Proposition~\ref{prop: sanitising presentation} we may choose a bipartite and positive presentation $(G|R)$ of $P$. Since by assumption $P$ is toric, it is the toric monoid associated to $(G|R)$. By Definition~\ref{def: tropical monoid} it suffices to find a representable tropical type $\uptau$ whose associated tropical presentation $(G_\uptau|R_\uptau)$ coincides with $(G|R)$.

Since the presentation is bipartite we have $G=G_1 \sqcup G_2$, with elements of $G_1$ only appearing on the left-hand side of relations in $R$, and elements of $G_2$ only appearing on the right-hand side. Write
\[
G_1 = \{ g_1,\ldots,g_{k_1} \}, \qquad G_2 = \{ g_{k_1+1},\ldots, g_{k_1+k_2} \}
\]
and let $\Gamma$ be the following graph, with edges labeled by the generators $g \in G$
\begin{equation} \label{eqn: graph}
\begin{tikzpicture}[baseline=(current  bounding  box.center)]

\draw (-1,0) node[above]{\small$v_1$};
\draw [fill=black] (-1,0) circle[radius=2pt];
\draw (-1,0) -- (-1,-0.5) [->];
\draw (-1,-0.5) node[left]{\small$g_1$};
\draw (-1,-0.5) -- (-1,-1);
\draw [fill=black] (-1,-1) circle[radius=2pt];
\draw (-1,-1) -- (-1,-1.5) [->];
\draw (-1,-1.5) node[left]{\small$g_2$};
\draw (-1,-1.5) -- (-1,-2);
\draw [fill=black] (-1,-2) circle[radius=2pt];
\draw (-1,-2) -- (-1,-2.5);
\draw (-1,-2.75) node{$\vdots$};
\draw (-1,-3.25) -- (-1,-3.75);
\draw [fill=black] (-1,-3.75) circle[radius=2pt];
\draw [->] (-1,-3.75) -- (-0.5,-4.25);
\draw (-0.3,-4.45) node[left]{\small$g_{k_1}$};
\draw (-0.5,-4.25) -- (0,-4.75);

\draw (1,0) node[above]{\small$v_2$};
\draw [fill=black] (1,0) circle[radius=2pt];
\draw (1,0) -- (1,-0.5) [->];
\draw (1,-0.5) node[right]{\small$g_{k_1+1}$};
\draw (1,-0.5) -- (1,-1);
\draw [fill=black] (1,-1) circle[radius=2pt];
\draw (1,-1) -- (1,-1.5) [->];
\draw (1,-1.5) node[right]{\small$g_{k_1+2}$};
\draw (1,-1.5) -- (1,-2);
\draw [fill=black] (1,-2) circle[radius=2pt];
\draw (1,-2) -- (1,-2.5);
\draw (1,-2.75) node{$\vdots$};
\draw (1,-3.25) -- (1,-3.75);
\draw [fill=black] (1,-3.75) circle[radius=2pt];
\draw [->] (1,-3.75) -- (0.5,-4.25);
\draw (0.3,-4.45) node[right]{\small$g_{k_1+k_2}$};
\draw (0.5,-4.25) -- (0,-4.75);

\draw [fill=black] (0,-4.75) circle[radius=2pt];
\draw (0,-4.75) node[below]{\small$v_0$};

\end{tikzpicture}
\end{equation}
so that the path from $v_i$ to $v_0$ contains precisely the generators in $G_i$. To incorporate the relations, set $n \colonequals |R|$. We will define a tropical type of map to $\R^n_+$ with source graph $\Gamma$. For $g \in G$ let $\vec{g}$ be the corresponding edge oriented as in \eqref{eqn: graph}, and define the corresponding slope
\[ m_{\vec{g}} \in \Z^n\]
as follows: for $i \in [n]$, the component $(m_{\vec{g}})_i$ is defined to be the coefficient with which $g$ appears in the $i$th relation of $R$. Note that we always have
\[ (m_{\vec{g}})_i \geqslant 0.\]
For $v \in V(\Gamma)$ define
\[ d_v \in \Z^n\] 
as the sum of the slopes of the adjacent edges (oriented away from $v$), so that the balancing condition is satisfied. To define the cones associated to vertices, we first set
\[ \upsigma_{v_1} = \upsigma_{v_2} = 0 \leqslant \R^n_+. \]
The other cones $\upsigma_v$ are defined recursively by flowing along the oriented graph \eqref{eqn: graph}. If $\vec{g}$ is an oriented edge from $v$ to $v^\prime$ and if $I(v) \subseteq [n]$ is already determined, define
\[ I(v^\prime) \colonequals I(v) \cup \{ i \in [n] \suchthat (m_{\vec{g}})_i > 0 \}.\]
This recursively defines the indexing sets $I(v)$ and hence the cones $\upsigma_v=\upsigma_{I(v)}$. For an edge $g \in E(\Gamma)$ connecting $v$ and $v^\prime$ we declare $I(g) \colonequals I(v) \cup I(v^\prime)$. We only need to check consistency at $v_0$. We claim that whether we approach from $v_1$ or from $v_2$, we must have
\begin{equation} \label{eqn: cone of v0 is whole Rn} \upsigma_{v_0} = \R^n_+. \end{equation}
Indeed, each index $i \in [n]$ corresponds to a relation in $R$. Since the presentation is positive and the monoid is sharp, it follows that this relation must have a generator on each side with a strictly positive coefficient, guaranteeing \eqref{eqn: cone of v0 is whole Rn}. 

This completes the description of the tropical type $\uptau$. By design, the associated tropical presentation $(G_\uptau|R_\uptau)$ coincides with the initial presentation $(G|R)$ and so $P_\uptau = P$ as required.

It remains to show that $\uptau$ is representable. By Lemma~\ref{lem: representable iff element of dual cone} it is equivalent to show that there exists a monoid morphism
\[ u \colon P_\uptau \to \RR_+\] 
which is non-zero on the standard generators appearing in Definition~\ref{def: tropical presentation}. For the tropical type we have constructed, it is enough for this morphism to be non-zero on the generators $g_i$ corresponding to the edge lengths; indeed, by the structure of the cone assignments $\upsigma_v$, this implies that the relative interior of every vertex and edge is mapped into the relative interior of the corresponding cone.

By assumption the monoid $P$ is toric, so it includes into the lattice $M \colonequals P^{\gp}$ as
\[ P = \upsigma^\vee \cap M \]
where $\upsigma$ is a strictly convex rational polyhedral cone of full dimension in the dual vector space $M_{\R}^\vee$. Since $\upsigma$ has full dimension, $\upsigma^\vee$ is strictly convex, i.e. it contains the origin as a face, so there exists a normal vector $u \in M_{\R}^\vee$ with $P \subseteq \{ u \geqslant 0\}$ and $P \cap \{ u = 0 \} = 0$. Therefore $u$ restricts to a monoid morphism
\[ u \colon P \to \RR_+\]
with $u^{-1}(0)=0$. Since the presentation is positive it follows that $u(g_i) \neq 0$ for all $g_i \in G$. We conclude that $\uptau$ is representable.
\end{proof}


\begin{remark} Since $P$ is assumed toric, the presentation $(G|R)$ already gives rise to a toric monoid and so the torification step in the construction of $P_\uptau$ is redundant. However, torification plays an important and subtle role in the boundedness results of Section~\ref{sec: boundedness}.
\end{remark}

\begin{example} \label{example main construction} Consider the following strictly convex rational polyhedral cone $\upsigma^\vee \subseteq \R^2$
\[
\begin{tikzpicture}[scale=1.3]

\fill[blue!13!white] (2, 0.5) -- (3.1, 0.5) -- (3.1, 3.8);

\draw[lightyblue] (3.4,2) node{$\upsigma^\vee$};

\draw[black, smooth, ->] (2, 0.5) -- (3.25, 0.5);
\draw[black, smooth, ->] (2, 0.5) -- (3, 3.5);

\filldraw [black] (3, 3) circle (1.5pt);
\filldraw [black] (2.5, 3) circle (1.5pt);
\filldraw [black] (2, 3) circle (1.5pt);
\filldraw [black] (3, 2.5) circle (1.5pt);
\filldraw [black] (2.5, 2.5) circle (1.5pt);
\filldraw [black] (2, 2.5) circle (1.5pt);

\filldraw [black] (3, 2) circle (1.5pt);

\filldraw [black] (2.5, 2) circle (1.5pt);
\filldraw [black] (2, 2) circle (1.5pt);
\draw (2.5,2) node[below]{\SMALL$(1,3)$};
\filldraw [black] (3, 1.5) circle (1.5pt);
\filldraw [black] (2.5, 1.5) circle (1.5pt);
\filldraw [black] (2, 1.5) circle (1.5pt);
\filldraw [black] (3, 1) circle (1.5pt);
\filldraw [black] (2.5, 1) circle (1.5pt);
\filldraw [black] (2, 1) circle (1.5pt);

\filldraw [black] (3, 0.5) circle (1.5pt);

\filldraw [black] (2.5, 0.5) circle (1.5pt);
\draw (2.5,0.5) node[below]{\SMALL$(1,0)$};
\draw (2.5,1) node[below]{\SMALL$(1,1)$};
\draw (2.5,1.5) node[below]{\SMALL$(1,2)$};

\filldraw [black] (2, 0.5) circle (1.5pt);
\end{tikzpicture}
\]
and let $P = \upsigma^\vee \cap \Z^2$ be the monoid of lattice points. Set
\[ e_0 = (1,0), \quad e_1 = (1,1), \quad e_2 = (1,2), \quad e_3 = (1,3). \]
Then $P$ is generated by $e_0,e_1,e_2,e_3$ subject to the relations
\begin{align*} e_0 + e_2 & = 2e_1 \\
e_1 + e_3 & = 2e_2 \\
e_0 + e_3 & = e_1 + e_2.	
\end{align*}
To construct a corresponding tropical type, we first replace the above presentation by a bipartite and positive presentation. This is given by generators $G=\{ e_0,e_1,e_2,e_3,f_1,f_2 \}$ subject to the relations
\begin{align*} e_0 + f_2 & = 2e_1 \\
f_1 + e_3 & = 2e_2 \\
e_0 + e_3 & = e_1 + e_2 \\
f_1 & = e_1 \\
f_2 & = e_2
\end{align*}
so that $G_1 = \{e_0,e_3,f_1,f_2\}$ and $G_2 = \{ e_1,e_2\}$. Since there are $5$ relations we consider maps to $\R^5_+$ and produce the following tropical type, with slope vectors $m_{\vec{e}}$ indicated in blue:
\[
\begin{tikzpicture}[baseline=(current  bounding  box.center)]

\draw [fill=black] (-1,0) circle[radius=2pt];
\draw (-1,0) -- (-1,-0.5) [->];
\draw (-1,-0.5) node[right]{\small$e_0$};
\draw [blue] (-1,-0.45) node[left]{\tiny$(1,0,1,0,0)$};
\draw (-1,-0.5) -- (-1,-1);
\draw [fill=black] (-1,-1) circle[radius=2pt];
\draw (-1,-1) -- (-1,-1.5) [->];
\draw (-1,-1.5) node[right]{\small$e_3$};
\draw [blue] (-1,-1.45) node[left]{\tiny$(0,1,1,0,0)$};
\draw (-1,-1.5) -- (-1,-2);
\draw [fill=black] (-1,-2) circle[radius=2pt];
\draw (-1,-2) -- (-1,-2.5) [->];
\draw (-1,-2.5) node[right]{\small$f_1$};
\draw [blue] (-1,-2.45) node[left]{\tiny$(0,1,0,1,0)$};
\draw (-1,-2.5) -- (-1,-3);
\draw [fill=black] (-1,-3) circle[radius=2pt];
\draw [->] (-1,-3) -- (-0.5,-3.5);
\draw (-0.3,-3.25) node[]{\small$f_{2}$};
\draw [blue] (-1.4,-3.5) node[]{\tiny$(1,0,0,0,1)$};
\draw (-0.5,-3.5) -- (0,-4);

\draw [fill=black] (0,-4) circle[radius=2pt];

\draw [fill=black] (1,-2) circle[radius=2pt];
\draw (1,-2) -- (1,-2.5) [->];
\draw (1,-2.5) node[left]{\small$e_1$};
\draw [blue] (1,-2.5) node[right]{\tiny$(2,0,1,1,0)$};
\draw (1,-2.5) -- (1,-3);
\draw [fill=black] (1,-3) circle[radius=2pt];
\draw (1,-3) -- (0.5,-3.5) [->];
\draw (0.3,-3.25) node[]{\small$e_2$};
\draw [blue] (1.4,-3.5) node[]{\tiny$(0,2,1,0,1)$};
\draw (0.5,-3.5) -- (0,-4);

\end{tikzpicture}
\]
\end{example}

\subsection{Singularities of moduli} \label{sec: singularities of Artin fan} Theorem~\ref{thm: main theorem} establishes universality for moduli spaces of tropical maps. The algebraic analogues of tropical maps are logarithmic maps to Artin fans, and so we immediately obtain the following:

\begin{theorem}[Theorem~\ref{cor: main introduction}] \label{cor: main cor} Every toric singularity appears in a moduli space $\Log(\Acal^n)$
of (genus zero) prestable logarithmic maps to the Artin fan, for some $n \in \N$ depending on the singularity.
\end{theorem}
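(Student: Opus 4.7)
The plan is to deduce this statement directly from the two main ingredients already established in the excerpt: Theorem~\ref{thm: main theorem} (which provides a representable tropical type realising any prescribed toric monoid) and Proposition~\ref{prop: representability implies map to Artin fan exists} (which translates representable tropical types into genuine logarithmic maps to Artin fans with the prescribed local singularity type). So this is essentially a combining-of-ingredients argument rather than a fresh construction.

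First I would unwind what ``toric singularity'' means: an affine toric singularity is by definition of the form $\Spec \kfield[P]$ for a toric monoid $P$. Given such a $P$, I would invoke Theorem~\ref{thm: main theorem} to produce an integer $n \in \N$ (depending on $P$) and a representable tropical type $\uptau$ of map to $\R^n_+$, with $g(\Gamma)=0$, satisfying $P_\uptau = P$. The construction is not used directly here — only the conclusion that such a $\uptau$ exists — so at this step nothing new is required.

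Next I would feed $\uptau$ into Proposition~\ref{prop: representability implies map to Artin fan exists}. Representability guarantees the existence of a logarithmic map $\Ccal \to \Acal^n$ tropicalising to $\uptau$, and the proposition then asserts that in a Zariski neighbourhood of the moduli point corresponding to this map, the stack $\Log(\Acal^n)$ has singularity type $\Spec \kfield[P_\uptau] = \Spec \kfield[P]$. Since $P$ was an arbitrary toric monoid, every toric singularity is realised. The genus zero assertion is inherited from the ``moreover'' clause of Theorem~\ref{thm: main theorem}: the source graph $\Gamma$ of the tropical type is a tree (cf.\ the graph \eqref{eqn: graph} in the proof of Theorem~\ref{thm: main theorem}), and the algebraic curve $C$ chosen with dual graph $\Gamma$ in the proof of Proposition~\ref{prop: representability implies map to Artin fan exists} therefore has arithmetic genus zero.

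There is no genuine obstacle, as the substantive content is entirely in the two cited results; the only thing to verify carefully is that the $n$ and the target $\Acal^n$ produced in the previous section are precisely what Proposition~\ref{prop: representability implies map to Artin fan exists} accepts as input, and that the genus condition is preserved under the tropical-to-logarithmic passage. Both are immediate from the constructions given.
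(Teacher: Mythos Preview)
Your proposal is correct and follows exactly the paper's own approach: the paper's proof is a one-line deduction from Theorem~\ref{thm: main theorem} and Proposition~\ref{prop: representability implies map to Artin fan exists}, together with the remark that toric singularities are classified by toric monoids. Your version simply spells out the same logic in more detail, including the genus-zero verification.
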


\begin{proof} Since toric singularities are classified by toric monoids, the claim follows from Theorem~\ref{thm: main theorem} and Proposition~\ref{prop: representability implies map to Artin fan exists}.
\end{proof}


\subsection{Variations on a theme: tropical maps to affine spaces} \label{sec: variants} We now study tropical maps
\[ \sqC \to \R^n \]
where the target $\R^n$ does not carry any fan structure. These arise as tropicalisations of maps to an algebraic torus over a field with a real valuation \cite{MikhalkinTropicalCorrespondence,NishinouSiebert}.

Tropical types of such maps are defined exactly as in Definition~\ref{def: tropical type}, except that we do not include the data of cones $\upsigma_p$ associated to polyhedra $p \in V(\Gamma) \sqcup E(\Gamma) \sqcup L(\Gamma)$ and we set all $d_v=0$. The latter assumption results in the classical balancing condition
\[ \sum_{v \leqslant \vec{e}} m_{\vec{e}} = 0.\]
We consider moduli of such maps up to overall target translation. As in Definition~\ref{def: tropical monoid}, each tropical type determines a moduli cone, defined equivalently via its dual monoid as follows.

\begin{definition} \label{def: tropical monoid for map to Rn} The \textbf{tropical presentation} associated to a tropical type $\uptau$ of map to $\R^n$ has generating set indexed by the edge lengths
\[ G_\uptau \colonequals \{ \ell_e \suchthat e \in E(\Gamma) \}.\]
The relations $R_\uptau$ are defined as follows. Given an oriented cycle $\upgamma$ of edges in $\Gamma$ we consider the expression
\[ \sum_{\vec{e} \in \upgamma} m_{\vec{e}} \, \ell_e = 0. \]
Here each $m_{\vec{e}} \in \Z^n$ and so this expression gives $n$ relations amongst the generators $\ell_e$. Each such relation can be uniquely rearranged into a relation with non-negative coefficients. These constitute the set $R_\uptau$. The \textbf{tropical monoid} $P_\uptau$ is by definition the toric monoid associated to the presentation $(G_\uptau | R_\uptau)$.
\end{definition}

\begin{remark} \label{rmk: target translation} We can also consider tropical maps to $\R^n$ without identifying maps up to overall translation. In this case we simply replace the tropical monoid $P_\uptau$ by the product $P_\uptau \times \Z^n$. Since $\Speck [\Z^n] = \Gm^n$ is smooth, this does not affect the singularity type in the sense of \cite[Section~1]{VakilMurphy}.
\end{remark}

We now establish universality for tropical maps to affine spaces. Unlike in Theorem~\ref{thm: main theorem}, we cannot produce all toric monoids using only source graphs of genus zero. Indeed, if $\Gamma$ contains no cycles then Definition~\ref{def: tropical monoid for map to Rn} simply gives
\[ P_\uptau = \N^{E(\Gamma)}.\]
Nevertheless, we will see that it is sufficient to use source graphs of genus one. The proof illustrates a general principle: higher-genus continuity relations for tropical maps to affine spaces are equivalent to genus-zero continuity relations for tropical maps to orthants. This is due to the constraints on the image cones imposed in the latter case. This same principle is lurking in the discussion of monogenic types in Section~\ref{sec: monogenic types}; see in particular the proof of Proposition~\ref{prop: replace type by monogenic}.

\begin{theorem} Let $P$ be an arbitrary toric monoid. Then there exists an $n \in \N$ and a representable tropical type $\uptau$ of map to $\R^n$ such that $P=P_\uptau$. The tropical type can be chosen with $g(\Gamma)=1$.
\end{theorem}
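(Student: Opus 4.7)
\medskip

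\noindent\textbf{Proof proposal.} The plan is to mirror the construction of Theorem~\ref{thm: main theorem}, but to replace the two-path genus-zero source graph (whose vertex-cone data encoded the bipartite relations) by a genus-one graph obtained by identifying its two top vertices $v_1$ and $v_2$ into a single vertex $v_{\mathrm{top}}$. In the orthant setting the relations were forced by the flow of cone assignments $\upsigma_v$ along the two paths meeting at $v_0$; in the affine-space setting, where no cone data is available, the same relations will instead be forced by the continuity equation around the unique independent cycle of this new graph. This is precisely the ``higher-genus continuity versus constrained genus-zero continuity'' principle alluded to in the paragraph preceding the theorem.

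First I would apply Proposition~\ref{prop: sanitising presentation} to replace the given presentation of $P$ by a bipartite and positive presentation $(G|R)$ with $G = G_1 \sqcup G_2$, and set $n \colonequals |R|$. Then I would take $\Gamma$ to be the graph whose vertex set consists of $v_{\mathrm{top}}$, $v_{\mathrm{bot}}$, and intermediate vertices strung along two disjoint paths from $v_{\mathrm{top}}$ to $v_{\mathrm{bot}}$, the first path having edges labelled by the elements of $G_1$ and the second by those of $G_2$, each oriented from $v_{\mathrm{top}}$ to $v_{\mathrm{bot}}$. A quick Euler-characteristic count confirms $g(\Gamma)=1$. For each oriented edge $\vec{g}$ and each $i \in [n]$ I declare $(m_{\vec{g}})_i$ to be the non-negative coefficient of $g$ on whichever side of the $i$th relation $g$ belongs; the multi-degrees $d_v$ are then forced by balancing, and since no cone data must be respected, this places no further constraint.

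Next I would verify $P_\uptau = P$. The cycle space of $\Gamma$ is one-dimensional, spanned by the cycle traversing path~1 downward and path~2 upward; for each $i \in [n]$ the associated continuity relation, once rearranged into non-negative form, is exactly the $i$th relation of $R$ under the identification $\ell_g \leftrightarrow g$, and every other oriented cycle yields only an integer multiple of these. Representability would then follow from (a suitable $\R^n$-analogue of) Lemma~\ref{lem: representable iff element of dual cone} together with positivity of $(G|R)$: the toric structure of $P$ supplies a functional $u \colon P \to \RR_+$ with $u^{-1}(0)=0$, exactly as in Theorem~\ref{thm: main theorem}, and positivity then guarantees that $u(\ell_g) \neq 0$ for every $g \in G$. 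The main obstacle I anticipate is the orientation and sign bookkeeping required to confirm that the cycle relations match those of $R$ on the nose and that no stray cycle relations are introduced, together with verifying that the torification step in the definition of $P_\uptau$ is redundant here --- which it is, because the presentation is bipartite, positive, and obtained from a toric monoid.
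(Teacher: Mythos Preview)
Your proposal is correct and is essentially the paper's own argument: the paper invokes the construction of Theorem~\ref{thm: main theorem} and then glues the two leaf vertices $v_1,v_2$ into a single vertex (forgetting the cone data $\upsigma_p$), which is exactly your graph with $v_{\mathrm{top}}$ and $v_{\mathrm{bot}}$ joined by the two paths indexed by $G_1$ and $G_2$. The only stylistic difference is that the paper packages the verification of $P_\uptau=P$ and representability by reference to the orthant case, whereas you redo these checks directly.
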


\begin{proof} Run the construction given in the proof of Theorem~\ref{thm: main theorem}. This produces a tropical type of map to $\R^n_+$ whose tropical monoid is isomorphic to $P$ and whose source graph $\Gamma$ takes the form \eqref{eqn: graph}. Note that $g(\Gamma)=0$.

 Now glue together the leaf vertices $v_1, v_2 \in V(\Gamma)$ and forget all cones $\upsigma_p$. To remove the multi-degrees at the vertices, attach to each $v \in V(\Gamma)$ a semi-infinite leg $l_v$ and set
 \[ m_{\vec{l}_v} \colonequals - d_v.\]
Such semi-infinite legs are referred to as \textbf{$\uptau$-rays} in the literature. Finally set $d_v=0$ and observe that the balancing condition is still satisfied at $v$.
 
 This produces a tropical type $\uptau$ of map to $\R^n$ satisfying the balancing condition and with $g(\Gamma)=1$. The relations arising from the single cycle of $\Gamma$ produce the same relations as in the proof of Theorem~\ref{thm: main theorem}.
  \end{proof}

\begin{remark}[Embedded tropical curves] In contrast to tropical maps, the moduli space of embedded tropical curves (either in $\R^n_+$ or in $\R^n$) is only well-defined as a cone complex up to further subdivision. The issue is the same as that which arises when defining moduli spaces of embedded $1$-complexes \cite[Section~3]{MR20}.

As such, the universality problem is not well-posed. Every toric monoid $P$ is trivially obtained from \emph{some} moduli space of embedded tropical curves: simply take one such moduli space, resolve singularities and then subdivide so that one of the cones becomes isomorphic to the dual cone of $P$.
\end{remark}

\section{Boundedness} \label{sec: boundedness}

\noindent In Theorem~\ref{thm: main theorem} above, the dimension $n$ of the tropical target depends on the chosen monoid $P$ (it is the number of relations in a given bipartite and positive presentation of $P$). We do not know whether this dependence is essential:

\begin{question}[Question~\ref{question introduction}] \label{question main} Does there exist a single $n \in \N$ such that every toric monoid appears as the tropical monoid associated to some tropical type of map to $\R^n_+$?
\end{question}

We conclude the paper by proving that $n=1$ does not suffice. We show that while every rank-$2$ monoid does appear (Theorem~\ref{thm: get all 2D cones}), there are certain rank-$3$ monoids which do not appear (Theorem~\ref{thm: 7-gon inaccesible}).

\subsection{Monoid rank} For the rest of the paper, we consider only tropical types of maps to $\R_+$. We begin by relating the rank of the tropical monoid $P_\uptau$ to the  combinatorics of $\uptau$. The main result of this section is:
\begin{theorem} \label{thm: monoid rank} Let $\uptau$ be a representable tropical type of map to $\R_+$. Then there exists another representable tropical type $\widetilde{\uptau}$ such that 
\[ P_\uptau = P_{\widetilde{\uptau}} \oplus \N^{E_0(\Gamma)} \]
where $E_0(\Gamma) \subseteq E(\Gamma)$ is the set of edges $e$ with $m_{\vec{e}}=0$. Moreover,
\[ \rk P_{\widetilde{\uptau}}^{\gp} = |V(\widetilde{\Gamma})| - 1. \]
\end{theorem}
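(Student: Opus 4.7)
The plan is to prove the splitting $P_\uptau \cong P_{\widetilde{\uptau}} \oplus \N^{E_0(\Gamma)}$ and the rank identity separately.

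For the splitting, I would inspect Definition~\ref{def: tropical presentation}: for $e \in E_0(\Gamma)$, the generator $\ell_e$ appears in no relation at all --- either $I(e) = \emptyset$ (no relation arises from $e$) or $I(e) = \{1\}$ and $m_{\vec e} = 0$ turns the relation into $\ftrop(v_2)_1 = \ftrop(v_1)_1$, which is independent of $\ell_e$. Thus the generators $\ell_e$ with $e \in E_0(\Gamma)$ split off as a free summand of $P_\uptau$.

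To produce $\widetilde{\uptau}$ I would normalise $\Gamma$ in three successive steps, each of which preserves both representability and the quotient monoid. First, contract every edge in $E_0(\Gamma)$: representability of $\uptau$ forces both endpoints of such an edge to carry the same cone label, so the operation is consistent and eliminates exactly the free summand $\N^{E_0(\Gamma)}$. Second, if the result has no origin vertex (no $v$ with $I(v) = \emptyset$), adjoin one by attaching a fresh slope-$1$ edge $e_0$ to any existing vertex $v^*$: the new relation $\ftrop(v^*)_1 = \ell_{e_0}$ makes the new generator redundant, so the monoid is unchanged. Third, if the result has multiple origin vertices, identify them all into a single one; since origin vertices carry no position generator and representability forbids any edge between two origin vertices after contraction, no new constraint is imposed. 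Denote the final graph $\widetilde{\Gamma}$: it has a unique origin vertex $v_0$ and every edge has nonzero slope.

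For the rank identity, $P_{\widetilde{\uptau}}^{\gp}$ is presented by $|E(\widetilde{\Gamma})| + (|V(\widetilde{\Gamma})|-1)$ generators (edge lengths plus coordinates of the non-origin vertices) and $|E(\widetilde{\Gamma})|$ relations, so it suffices to prove that the relations are linearly independent. Consider the linear map
\[ \phi \colon \R^{E(\widetilde{\Gamma})} \oplus \R^{|V(\widetilde{\Gamma})|-1} \to \R^{E(\widetilde{\Gamma})}, \qquad (\ell,f) \mapsto \big( f(v_2) - f(v_1) - m_e \ell_e \big)_{e \in E(\widetilde{\Gamma})}, \]
with the convention $f(v_0) = 0$. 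A dual vector $r = (r_e)_e$ pulls back to have $\ell_e$-coefficient $-m_e r_e$; since every $m_e \neq 0$, the equation $\phi^* r = 0$ forces $r = 0$. Hence $\phi^*$ is injective, $\phi$ is surjective, and $\rk P_{\widetilde{\uptau}}^{\gp} = \dim \ker \phi = |V(\widetilde{\Gamma})|-1$.

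The main obstacle will be orchestrating the three normalisations so that they combine correctly: the $E_0$-contraction is what makes every $m_e$ nonzero (the crux of surjectivity), while adjusting to exactly one origin vertex is what aligns the generator count with $|V(\widetilde{\Gamma})|-1$. Either modification in isolation would fail to produce the stated formula.
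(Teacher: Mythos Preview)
Your approach is correct in outline and takes a genuinely different route from the paper's, but there is one real gap in the rank step.

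\textbf{Comparison.} For the normalisation, the paper proceeds in the opposite order: it first enforces a unique origin vertex together with the reachability condition that every non-origin vertex has an inward-sloping adjacent edge (the paper's notion of \emph{monogenic}, Definition~\ref{def: monogenic type}), and only afterwards contracts the slope-zero edges. You contract first and then adjust the origins; your $\widetilde{\uptau}$ is expansive with a unique origin but need not satisfy monogenicity condition~(2). This causes no trouble, because you never use the monogenic presentation. For the rank, the paper passes to the \emph{monogenic presentation} (Definition~\ref{def: monogenic presentation}), whose generators are edge lengths alone and whose relations come from cycles; a spanning-tree argument then shows the $b_1(\Gamma)$ cycle relations are independent (each non-tree edge is a protected variable), giving $\rk = |E|-b_1 = |V|-1$. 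You instead stay with the tropical presentation and observe that each of the $|E|$ continuity relations contains the variable $\ell_e$ with nonzero coefficient $m_e$, forcing independence. The two arguments are dual in spirit; yours avoids introducing a second presentation, while the paper's monogenic presentation is reused downstream in Section~\ref{sec: unparalleled monoid}.

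\textbf{The gap.} Your surjectivity-of-$\phi$ argument computes $\rk\!\big(\Z^{G_{\widetilde{\uptau}}}/R_{\widetilde{\uptau}}\big) = |V(\widetilde{\Gamma})|-1$. But $P_{\widetilde{\uptau}}$ is the \emph{torification} of $\N^{G_{\widetilde{\uptau}}}/R_{\widetilde{\uptau}}$, and the final sharpening step of torification (Remark~\ref{rmk: torification}) quotients by the group of units, which can strictly decrease the rank of the groupification. So as written you have only established $\rk P_{\widetilde{\uptau}}^{\gp} \leqslant |V(\widetilde{\Gamma})|-1$. The missing ingredient is precisely Lemma~\ref{lem: sharpening unnecessary}: representability of $\widetilde{\uptau}$ furnishes a monoid morphism to $\R_+$ that is strictly positive on all generators, hence sharp, which forces the pre-sharpened monoid to already be sharp. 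You have carefully propagated representability through your three normalisation steps, so invoking that lemma (or reproving it in one line) closes the gap.
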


\subsubsection{Monogenic types} \label{sec: monogenic types} The strategy is to reduce to a special class of tropical types whose associated monoids can be easily controlled.

\begin{definition} \label{def: monogenic type} A tropical type $\uptau$ is \textbf{monogenic} if the following two conditions hold:
\begin{enumerate}
\item There is precisely one vertex $v_0 \in V(\Gamma)$ with $\upsigma_{v_0}=0$.\smallskip
\item For every other vertex $v \neq v_0$ there exists at least one adjacent edge $e \in E(\Gamma)$ with
\[m_{\vec{e}} < 0\]
where $\vec{e}$ is oriented to point away from $v$.
\end{enumerate}
\end{definition}
A monogenic type has a unique root vertex $v_0$ lying over $0 \leqslant \R_+$. All other vertices can be reached from $v_0$ along a path of rightward-sloping edges.\medskip
\[
\begin{tikzpicture}[scale=0.9]

\draw[blue,fill=blue] (0,0) circle[radius=2pt];
\draw[blue,->] (0,0) -- (3,0);
\draw (1.5,-0.75) node{\small{Not monogenic}};
\draw[blue,->] (1.5,1) -- (1.5,0.25);

\draw[fill=black] (0,1) circle[radius=2pt];
\draw (0,1) -- (2,1.5);
\draw[fill=black] (2,1.5) circle[radius=2pt];
\draw (2,1.5) -- (0,2);
\draw[fill=black] (0,2) circle[radius=2pt];

\draw[blue,fill=blue] (4,0) circle[radius=2pt];
\draw[blue,->] (4,0) -- (7,0);
\draw (5.5,-0.75) node{\small{Not monogenic}};
\draw[blue,->] (5.5,1) -- (5.5,0.25);

\draw[fill=black] (4,2.5) circle[radius=2pt];
\draw (4,2.5) -- (6,2);
\draw[fill=black] (6,2) circle[radius=2pt];
\draw (6,2) to[bend left] (5,1.5);
\draw (6,2) to[bend right] (5,1.5);
\draw[fill=black] (5,1.5) circle[radius=2pt];

\draw[blue,fill=blue] (8,0) circle[radius=2pt];
\draw[blue,->] (8,0) -- (11,0);
\draw (9.5,-0.75) node{\small{Monogenic}};
\draw[blue,->] (9.5,1) -- (9.5,0.25);

\draw[fill=black] (8,1.5) circle[radius=2pt];
\draw (8,1.5) to[bend left] (9,1.5);
\draw (8,1.5) to[bend right] (9,1.5);
\draw[fill=black] (9,1.5) circle[radius=2pt];
\draw (9,1.5) to (10.5,1.5);
\draw[fill=black] (10.5,1.5) circle[radius=2pt];
\draw (9,1.5) to (9.75,2);
\draw[fill=black] (9.75,2) circle[radius=2pt];
\draw (9.75,2) to (10.5,1.5);

\draw[blue,fill=blue] (12,0) circle[radius=2pt];
\draw[blue,->] (12,0) -- (15,0);
\draw (13.5,-0.75) node{\small{Monogenic}};
\draw[blue,->] (13.5,1) -- (13.5,0.25);
\draw[fill=black] (12,2) circle[radius=2pt];
\draw (12,2) to (14.5,2);
\draw[fill=black] (14.5,2) circle[radius=2pt];
\draw (12,2) to (13.25,2.5);
\draw[fill=black] (13.25,2.5) circle[radius=2pt];
\draw (13.25,2.5) to[bend left] (14.5,2);
\draw[fill=black] (13.25,1.5) circle[radius=2pt];
\draw (13.25,2.5) to(14.5,2);
\draw (12,2) to (13.25,1.5);
\draw (13.25,1.5) to (14.5,2);

\end{tikzpicture}
\]
The tropical monoid associated to a monogenic type has a presentation which is more efficient than the general-purpose presentation given in Definition~\ref{def: tropical presentation}.

\begin{definition} \label{def: monogenic presentation} Let $\uptau$ be a monogenic tropical type of map to $\R_+$. The \textbf{monogenic presentation} has generating set
\[ G_\uptau \colonequals \{ \ell_e \suchthat e \in E(\Gamma) \}.\]
The relations $R_\uptau$ are indexed by closed cycles of oriented edges in $\Gamma$. Given such a cycle $\upgamma$ we consider the following relation
\[ \sum_{\vec{e} \in \upgamma} m_{\vec{e}}\, \ell_e = 0. \]
Here each $m_{\vec{e}} \in \Z$. This expression can be uniquely rearranged to ensure that all coefficients are non-negative, giving the relation in $R_\uptau$ associated to $\upgamma$.
\end{definition}

The toric monoid associated to the monogenic presentation coincides with the tropical monoid of Definition~\ref{def: tropical monoid}. When dealing with monogenic types we will only ever use the monogenic presentation; hence we overload notation and also denote it by $(G_\uptau | R_\uptau)$.

\begin{remark} The monogenic presentation is similar to the tropical presentation associated to a tropical type of map to affine space; see Definition~\ref{def: tropical monoid for map to Rn}.\end{remark}

\begin{proposition}\label{prop: replace type by monogenic} To every tropical type $\uptau$ there is an associated monogenic type $\widetilde{\uptau}$ with
\[ P_{\uptau} = P_{\widetilde{\uptau}}.\]
If $\uptau$ is representable then so is $\widetilde{\uptau}$.
\end{proposition}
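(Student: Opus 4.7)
The plan is to modify $\uptau$ in two steps: first by contracting the subgraph of vertices mapping to the origin, and then by attaching new edges to enforce the downward-edge condition characterising monogenicity.

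Set $V_0 \colonequals \{v \in V(\Gamma) \suchthat \upsigma_v = 0\}$. In Step 1 I would identify all vertices of $V_0$ to a single vertex $v_0$ (adjoining a new vertex $v_0$ if $V_0 = \emptyset$). Since vertices of $V_0$ carry $I(v) = \emptyset$ and hence contribute no position generators to the tropical presentation (Definition~\ref{def: tropical presentation}), and since the relations are indexed by edges and their data is preserved under the contraction, the tropical presentation is unchanged; in particular $P_\uptau$ is unaffected. In the representable case, one further notes that any edge $e$ with both endpoints in $V_0$ must have $\upsigma_e = 0$: the linear map $\ftrop$ restricted to such an edge has integer slope and vanishes at both endpoints, so it is either constantly zero (requiring $\upsigma_e = 0$ to satisfy the relative interior condition) or nontrivially varies and cannot return to zero at the other endpoint.

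In Step 2, for each vertex $v \neq v_0$ with no adjacent edge $\vec{e}$ satisfying $m_{\vec{e}} < 0$ when oriented away from $v$, I would attach a new edge $e_v$ from $v_0$ to $v$ with $\upsigma_{e_v} = \R_+$ and slope $+1$ oriented from $v_0$ to $v$; the multidegrees $d_v, d_{v_0}$ are updated to preserve balancing, and if $V_0 = \emptyset$ I would add at least one such edge to guarantee that $\widetilde{\Gamma}$ is connected (automatic in the representable case, since the vertex minimising $\ftrop$ has no downward adjacent edge). Each added edge introduces a generator $\ell_{e_v}$ and a single new relation $\ftrop(v)_1 = \ell_{e_v}$, which merely identifies a new generator with an existing one; an easy argument shows that such a move preserves the underlying monoid, hence also its torification $P_\uptau$. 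By construction $\widetilde{\uptau}$ is monogenic: $v_0$ is the unique vertex with $\upsigma = 0$, and every other vertex either already possesses a downward adjacent edge or has acquired the newly attached $e_v$.

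For the representability assertion, any realization $\ftrop$ of $\uptau$ extends to one of $\widetilde{\uptau}$ by setting $\ell_{e_v} \colonequals \ftrop(v) > 0$ and defining the extended map linearly along each added edge $e_v$; the relative interior of $e_v$ is then mapped into $\mathrm{RelInt}(\R_+)$, as required. The main obstacle is the careful verification that the tropical presentation is preserved under contraction, which rests on the observations that vertices of $V_0$ carry no position generators and that edges internal to $V_0$ carry the trivial cone $\upsigma_e = 0$ in the representable setting.
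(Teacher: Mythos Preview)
Your proof is correct and follows essentially the same approach as the paper's. Both constructions (i) attach a new slope-$1$ edge from a $\upsigma=0$ vertex to each vertex lacking a downward edge, and (ii) identify all $\upsigma=0$ vertices to a single $v_0$; the only difference is that the paper performs these in the opposite order, first attaching a fresh $\upsigma=0$ leaf to each offending vertex and then gluing, whereas you glue first and then attach edges directly to the already-formed $v_0$. Your treatment of the corner case $V_0=\emptyset$ is in fact slightly more explicit than the paper's.
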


\begin{proof} Consider vertices $v \in V(\Gamma)$ which have $\upsigma_v=\R_+$ and are such that $m_{\vec{e}} \geqslant 0$ for every outward-pointing adjacent edge $\vec{e}$. These vertices violate condition (2) in Definition~\ref{def: monogenic type}.

For every such vertex, introduce a new vertex $v^\prime$ with $\upsigma_{v^\prime}=0$, and a new edge $e$ connecting $v^\prime$ to $v$ with slope $m_{\vec{e}}=1$ where $\vec{e}$ is oriented from $v^\prime$ to $v$. To satisfy the balancing condition, set $d_{v^\prime}=1$ and replace $d_v$ by $d_v-1$. 

This results in a new tropical type which has the same tropical monoid as $\uptau$. Moreover this new tropical type is guaranteed at least one vertex with $\upsigma_v=0$, and is such that all vertices $v$ with $\upsigma_v = \R_+$ have an adjacent outward-pointing edge $\vec{e}$ with $m_{\vec{e}} < 0$.

Finally, glue together all vertices with $\upsigma_v=0$. This gives a new tropical type $\widetilde{\uptau}$ which is monogenic and has the same tropical monoid as $\uptau$. Clearly $\widetilde{\uptau}$ is representable if $\uptau$ is.
\end{proof}

The following example illustrates the above process. Note that the genus of $\Gamma$ increases.

\[
\begin{tikzpicture}

\draw[blue,fill=blue] (4,0) circle[radius=2pt];
\draw[blue,->] (4,0) -- (7,0);
\draw[blue,->] (5.5,1) -- (5.5,0.25);

\draw[fill=black] (4,2.5) circle[radius=2pt];
\draw (4,2.5) -- (6,2);
\draw[fill=black] (6,2) circle[radius=2pt];
\draw (6,2) to[bend left] (5,1.5);
\draw (6,2) to[bend right] (5,1.5);
\draw[fill=black] (5,1.5) circle[radius=2pt];

\draw (7.9,1.5) node{\LARGE$\rightsquigarrow$};

\draw[blue,fill=blue] (9,0) circle[radius=2pt];
\draw[blue,->] (9,0) -- (12,0);
\draw[blue,->] (10.5,1) -- (10.5,0.25);

\draw[fill=black] (9,2.5) circle[radius=2pt];
\draw (9,2.5) -- (11,2);
\draw[fill=black] (11,2) circle[radius=2pt];
\draw (11,2) to[bend left] (10,1.5);
\draw (11,2) to[bend right] (10,1.5);
\draw[fill=black] (10,1.5) circle[radius=2pt];
\draw (10,1.5) -- (9,1.5);
\draw[fill=black] (9,1.5) circle[radius=2pt];

\draw (12.9,1.5) node{\LARGE$\rightsquigarrow$};

\draw[blue,fill=blue] (14,0) circle[radius=2pt];
\draw[blue,->] (14,0) -- (17,0);
\draw[blue,->] (15.5,1) -- (15.5,0.25);

\draw[fill=black] (14,2) circle[radius=2pt];
\draw (14,2) -- (16,2);
\draw[fill=black] (16,2) circle[radius=2pt];
\draw (16,2) to[bend left] (15,1.5);
\draw (16,2) to[bend right] (15,1.5);
\draw[fill=black] (15,1.5) circle[radius=2pt];
\draw (15,1.5) -- (14,2);

\end{tikzpicture}
\]

\subsubsection{Expansive types}
\begin{definition} A tropical type $\uptau$ is \textbf{expansive} if there are no edges $e \in E(\Gamma)$ with $m_{\vec{e}}=0$.	
\end{definition}

\begin{proposition}\label{prop: replace type by monogenic and expansive} To every tropical type $\uptau$ there is an associated monogenic and expansive type $\widetilde{\uptau}$ with
\begin{equation} \label{eqn: monoid of tau vs tautilde} P_{\uptau} = P_{\widetilde{\uptau}} \oplus \N^{E_0(\Gamma)} \end{equation}
where $E_0(\Gamma)$ is the set of edges of slope zero in the tropical type $\uptau$. If $\uptau$ is representable then so is $\widetilde{\uptau}$.
\end{proposition}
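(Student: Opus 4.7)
The plan is to construct $\widetilde{\uptau}$ in two stages: first pass to a monogenic type via Proposition~\ref{prop: replace type by monogenic}, then contract every edge of slope zero. Applying Proposition~\ref{prop: replace type by monogenic} to $\uptau$ yields a monogenic type $\uptau'$ with $P_{\uptau} = P_{\uptau'}$. The construction used there only adds edges of slope $+1$ and glues together vertices with cone $0$; no existing slope is altered and no new slope-zero edges are introduced. In particular $E_0(\Gamma') = E_0(\Gamma)$.

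Next, contract every slope-zero edge of $\uptau'$ to produce $\widetilde{\uptau}$. Representability of $\uptau'$ forces each slope-zero edge to either connect two vertices with $\upsigma = \R_+$, or to be a loop at $v_0$ with image cone $0$: any other configuration would require an interior point of the edge to map into the relative interior of $\R_+$ while being pinned to $0$. Consequently the contracted vertex inherits a well-defined image cone, and balancing is preserved since $m_{\vec{e}} = 0$ gives $d_w = d_u + d_v$. The resulting type $\widetilde{\uptau}$ is expansive by construction; monogenic, because the unique root $v_0$ survives and every other vertex retains an outgoing edge of strictly negative slope (only slope-zero edges were removed); and representable, by contracting the same edges in the metric realisation of $\uptau'$.

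Finally, compute the monoids via the monogenic presentation (Definition~\ref{def: monogenic presentation}). Every cycle relation $\sum_{\vec{e} \in \upgamma} m_{\vec{e}}\, \ell_e = 0$ of $\uptau'$ has zero coefficient on each $\ell_e$ with $e \in E_0$, so these edge-length generators split off freely at the presentation level:
\[ \N^{E(\Gamma')}/R_{\uptau'} \;=\; \bigl(\N^{E(\Gamma') \setminus E_0}/R_{\uptau'}\bigr) \oplus \N^{E_0}. \]
Cycles of $\widetilde{\Gamma}$ correspond under contraction to cycles of $\Gamma'$ yielding the same relations (after dropping zero contributions), and cycles of $\Gamma'$ supported entirely in $E_0$ produce trivial relations; this identifies the first summand with $\N^{E(\widetilde{\Gamma})}/R_{\widetilde{\uptau}}$. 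Since torification commutes with direct sums stepwise through Remark~\ref{rmk: torification} and $\N^{E_0}$ is already toric, we conclude $P_\uptau = P_{\uptau'} = P_{\widetilde{\uptau}} \oplus \N^{E_0(\Gamma)}$.

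The main obstacle will be the careful matching of cycle-space relations between $\uptau'$ and $\widetilde{\uptau}$: one must verify both that every cycle in $\widetilde{\Gamma}$ lifts to a cycle in $\Gamma'$ with matching relation, and that no non-trivial relation of $\uptau'$ is lost under contraction (with cycles lying entirely in $E_0$ giving only trivial relations that disappear harmlessly). Once this bookkeeping is in place, the commutation of torification with direct sums completes the argument.
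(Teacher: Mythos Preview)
Your proof follows exactly the paper's two-step approach---apply Proposition~\ref{prop: replace type by monogenic}, then contract all slope-zero edges---and supplies considerably more detail, especially in matching cycle relations before and after contraction to justify the monoid splitting via the monogenic presentation. One minor caveat: you invoke representability of $\uptau'$ to argue that contracted vertices inherit well-defined image cones, but the proposition is stated for arbitrary $\uptau$; the fix is immediate (assign cone $0$ to any merged cluster containing $v_0$ and $\R_+$ otherwise, which keeps the type monogenic), and the paper's own terse proof glosses over this point as well.
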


\begin{proof} First apply Proposition~\ref{prop: replace type by monogenic} to replace $\uptau$ by a monogenic type, also denoted $\uptau$, with the same tropical monoid. Then contract all edges $e \in E_0(\Gamma)$ and identify vertices of $\Gamma$ as necessary (notice that a slope zero edge may be a loop or may have distinct endpoints). This produces a monogenic and expansive type $\widetilde{\uptau}$. Since the lengths $\ell_e$ of slope zero edges $e \in E_0(\Gamma)$ are free parameters in $P_\uptau$, we immediately conclude \eqref{eqn: monoid of tau vs tautilde}. Again the representability statement is clear.
\end{proof}

\subsubsection{Monoid rank}
We will now control the rank of the tropical monoid associated to a monogenic and expansive type. Recall from Definition~\ref{def: tropical monoid} that the tropical monoid $P_\uptau$ is the torification of 
\[ \N^{G_{\uptau}}/R_\uptau.\]
Recall in addition from Remark~\ref{rmk: torification} that torification consists of three steps:
\begin{enumerate}
\item integralise and remove torsion;
\item saturate;
\item sharpen.
\end{enumerate}
Of these steps, only sharpening can change the rank of the groupification. With this in mind, the following technical lemma is necessary in order to control the monoid rank.

\begin{lemma} \label{lem: sharpening unnecessary} If $\uptau$ is a representable tropical type then the final sharpening step in the construction of $P_\uptau$ is redundant.
\end{lemma}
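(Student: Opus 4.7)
The plan is to prove directly that the monoid $Q$ obtained from $\N^{G_\uptau}/R_\uptau$ by integralising and saturating (steps (1) and (2) of Remark~\ref{rmk: torification}) is already sharp, so that the final sharpening step leaves it unchanged. Representability will enter by supplying a positive morphism to $\RR_+$ which rules out non-trivial units.

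The starting point will be Lemma~\ref{lem: representable iff element of dual cone}: representability of $\uptau$ is equivalent to the existence of a monoid morphism $u \colon P_\uptau \to \RR_+$ that is strictly positive on every standard generator. Composing with the natural map $\N^{G_\uptau}/R_\uptau \to P_\uptau$ yields a morphism $u \colon \N^{G_\uptau}/R_\uptau \to \RR_+$ with the analogous positivity. I would then propagate $u$ through the torification pipeline: it extends along the groupification and descends modulo torsion (because $\RR$ is torsion-free), yielding a morphism on the integralisation, and then extends to the saturation $Q$ by the rule $u(q) \colonequals u(nq)/n$ whenever $nq$ sits in the integralised monoid. At every stage $u$ remains strictly positive on the images of the original generators.

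With $u \colon Q \to \RR_+$ in hand, sharpness will follow by a short positivity argument. Take $k \in Q$ with $-k \in Q$. By definition of saturation, after choosing a common multiplier $n \geqslant 1$ one can write $nk = \pi(a)$ and $-nk = \pi(b)$ for some $a, b \in \N^{G_\uptau}$, where $\pi$ denotes the canonical map to $Q$. Then $\pi(a+b) = 0$ in the torsion-free group $Q^{\gp}$, and applying $u$ gives
\[
0 \;=\; u\bigl(\pi(a+b)\bigr) \;=\; \sum_{g \in G_\uptau} (a_g + b_g)\, u(g).
\]
All summands are non-negative and every $u(g) > 0$, so this forces $a = b = 0$. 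Hence $nk = 0$ in $Q^{\gp}$, and torsion-freeness yields $k = 0$, proving $Q^{*} = 0$.

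The only delicate step should be the bookkeeping in the second paragraph: carefully verifying that $u$ survives each torification step while retaining positivity on generators, and that the rule used to extend to the saturation is well-defined. Once this is pinned down, the forced vanishing of $a$ and $b$ in the last paragraph is immediate and requires no further input from tropical geometry.
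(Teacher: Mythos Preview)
Your proposal is correct and takes essentially the same approach as the paper: both arguments use the morphism $u \colon P_\uptau \to \RR_+$ supplied by Lemma~\ref{lem: representable iff element of dual cone}, pull it back to the pre-sharpened monoid, and use positivity on the standard generators to rule out non-trivial units. The paper compresses your explicit propagation of $u$ through the torification pipeline into a single line (noting that the saturation-added generators are $\Q_{>0}$-combinations of the original ones, so $u \circ w$ remains sharp), whereas you spell out the extension to the saturation and the vanishing argument for $a$ and $b$ in full; but the content is the same.
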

\begin{proof}
	
Let $P_\uptau^\flat$ denote the pre-sharpened monoid and consider the sharpening morphism
\[ w \colon P_\uptau^\flat \to P_\uptau. \]
Recall that $w$ is the quotient of $P_\uptau^\flat$ by the subgroup of units \cite[Chapter~I.1.3]{Ogus}. Assume for a contradiction that $w$ is not an isomorphism. Then there exists a non-zero $p \in P_\uptau^\flat$ with $w(p)=0$. By assumption $\uptau$ is representable, so by Lemma~\ref{lem: representable iff element of dual cone} there exists a morphism
\[ u \colon P_\uptau \to \R_+ \]
which is non-zero on the standard generators of Definition~\ref{def: tropical presentation}. Since these generate $P_\uptau^\flat$ it follows that the composite $u \circ w$ is sharp, i.e. $(u \circ w)^{-1}(0)=0$. This contradicts $w(p)=0$.

There is one caveat in the argument above: the saturation step in the construction of $P_\uptau^\flat$ can introduce additional generators. However these are all $\Q_{>0}$-linear combinations of the standard generators, so the same argument applies.
\end{proof}

\begin{remark} The above lemma essentially appears, in somewhat greater generality, as part of \cite[Proposition~1.19]{GrossSiebertLog}.
\end{remark}

\begin{theorem} \label{thm: monoid rank for monogenic and expansive} Let $\uptau$ be a representable, monogenic and expansive tropical type. Then
\[ \rk P_\uptau^{\gp} = |V(\Gamma)| -1. \]	
\end{theorem}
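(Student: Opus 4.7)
The plan is to use the monogenic presentation of $P_\uptau$ (Definition~\ref{def: monogenic presentation}), in which the only generators are edge lengths $\ell_e$ and the relations are the cycle relations $r_\upgamma = \sum_{\vec{e} \in \upgamma} m_{\vec{e}} \, \ell_e$, and reduce the problem to a graph-theoretic cycle-space count.

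First I would reduce to computing the rank of $(\N^{E(\Gamma)}/R_\uptau)^{\gp}$. By Lemma~\ref{lem: sharpening unnecessary}, representability permits us to bypass the sharpening step in the construction of $P_\uptau$. The remaining torification steps --- integralising and saturating --- do not affect the rank of the groupification: integralisation identifies the monoid with its image in the torsion-free part of its groupification (same rank), while saturation takes place within the groupification itself. Hence
$$\rk P_\uptau^{\gp} = \rk \left(\N^{E(\Gamma)}/R_\uptau\right)^{\gp} = |E(\Gamma)| - \rk K,$$
where $K \subseteq \Z^{E(\Gamma)}$ is the subgroup generated by the cycle relations $r_\upgamma$.

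To compute $\rk K$, I would fix a spanning tree $T \subseteq \Gamma$; the edges $e \notin T$ yield a basis $\{\upgamma_e\}_{e \notin T}$ of fundamental cycles for the cycle space of $\Gamma$, of rank $|E(\Gamma)| - |V(\Gamma)| + 1$. Every oriented cycle is a $\Z$-linear combination of these fundamental cycles, so $K$ is generated by $\{r_{\upgamma_e}\}_{e \notin T}$. Now $r_{\upgamma_e}$ involves $\ell_e$ with coefficient $\pm m_{\vec{e}}$, which is nonzero by expansiveness, while for distinct $e, e' \notin T$ the relation $r_{\upgamma_{e'}}$ does not involve $\ell_e$, since $\upgamma_{e'} \subseteq T \cup \{e'\}$. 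Consequently $\{r_{\upgamma_e}\}_{e \notin T}$ are $\Q$-linearly independent in $\Z^{E(\Gamma)} \otimes \Q$, giving $\rk K = |E(\Gamma)| - |V(\Gamma)| + 1$ and hence the claimed formula $\rk P_\uptau^{\gp} = |V(\Gamma)| - 1$.

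The only conceptual point requiring care is confirming that integralisation and saturation preserve the rank of the groupification, and invoking Lemma~\ref{lem: sharpening unnecessary} to dispense with sharpening; once these reductions are in place, the remainder is a standard cycle-space count. Monogenicity is used only to legitimise the monogenic presentation (otherwise the vertex-position generators of Definition~\ref{def: tropical presentation} would enter and inflate the count), while expansiveness is precisely what ensures that the fundamental cycle relations have nonzero leading coefficient and so remain $\Q$-linearly independent.
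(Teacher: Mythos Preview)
Your proposal is correct and follows essentially the same route as the paper's proof: invoke Lemma~\ref{lem: sharpening unnecessary} to dispense with sharpening, observe that integralising and saturating preserve the rank of the groupification, then use the monogenic presentation together with a spanning tree and the protected-variable argument (expansiveness giving the nonzero coefficient on $\ell_e$) to count the rank of the relation subgroup as $b_1(\Gamma)$. The only cosmetic difference is that the paper phrases the reduction as $P_\uptau^{\gp} = \Z^{G_\uptau}/R_\uptau^{\sat}$ while you write $|E(\Gamma)| - \rk K$; these are equivalent.
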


\begin{proof} Since $\uptau$ is representable, Lemma~\ref{lem: sharpening unnecessary} shows that the final sharpening step in the construction of $P_\uptau$ is redundant. The other torification steps --- integralising and saturating --- do not change the groupification of the monoid. As such, we may identify $P_\uptau^{\gp}$ with the torsion-free part of the groupification of the not-necessarily-toric monoid
\[ \N^{G_\uptau}/R_\uptau.\]
The torsion-free part is extracted by saturating the relations $R_\uptau$. Since groupification commutes with direct limits \cite[Chapter~I.1.3]{Ogus}, we conclude that
\[ P_\uptau^{\gp} = \Z^{G_\uptau}/R_\uptau^{\sat}.\]
Recall we are using the monogenic presentation of Definition~\ref{def: monogenic presentation}. We have $G_\uptau = E(\Gamma)$, while the relations $R_\uptau$ are indexed by cycles in $\Gamma$. Fix a spanning tree
\[ \Gamma_0 \subseteq \Gamma.\]
Let $b=b_1(\Gamma)$ denote the genus of the graph. Then $\Gamma_0$ is obtained from $\Gamma$ by deleting some edges $e_1,\ldots,e_b$. For each $i \in [b]$, the graph $\Gamma_0 \cup e_i$ contains a single closed cycle giving rise to a single relation in $R_\uptau$. Since $\uptau$ is expansive we have $m_{\vec{e}_i} \neq 0$ so that the edge length $\ell_{e_i}$ appears in this relation with non-zero coefficient.

Taken together, these relations span $R_\uptau$ over $\Z$. They are linearly independent since each relation contains a protected variable $\ell_{e_i}$ which does not appear in any other relation. Because of this independence, $R_\uptau^{\sat}$ is generated by the saturations of the generators of $R_\uptau$. We conclude that it has rank $b$, and hence
\[ \rk P_\uptau^{\gp} = \rk \left( \Z^{G_\uptau} /R_\uptau^{\sat} \right) = |E(\Gamma)| - b_1(\Gamma) = |V(\Gamma)| - 1\]
as required.
\end{proof}

\begin{proof}[Proof of Theorem~\ref{thm: monoid rank}]
Combine Proposition~\ref{prop: replace type by monogenic and expansive} and Theorem~\ref{thm: monoid rank for monogenic and expansive}.
\end{proof}

\subsection{Cautionary tale: redemption through saturation} Now consider a toric monoid $P$ with
\[ \rk P^{\gp}=2.\]
Such monoids are given by $P = \upsigma^\vee \cap M$ for $M \cong \Z^2$ a two-dimensional lattice and $\upsigma \subseteq M_{\R}^\vee$ a strictly convex rational polyhedral cone of full dimension in the dual vector space. Assume that $\upsigma$ is singular, so that $P \not\cong \N^2$.

If $\uptau$ is a representable tropical type with $P_\uptau \cong P$ then $\uptau$ must be expansive, since $P$ does not contain any $\N$ factors. By Proposition~\ref{prop: replace type by monogenic and expansive} we may therefore assume that $\uptau$ is monogenic. From Theorem~\ref{thm: monoid rank for monogenic and expansive} we conclude $|V(\Gamma)| - 1 = \rk P_\uptau^{\gp}=2$ and so
\[ |V(\Gamma)| = 3.\]
Representable, monogenic, expansive tropical types with three vertices are easily enumerated. Those giving rise to singular monoids all essentially take the form
\begin{equation} \label{eqn: triangle tropical type}
\begin{tikzpicture}[baseline=(current  bounding  box.center)]

\draw[blue,fill=blue] (0,0) circle[radius=2pt];
\draw[blue,->] (0,0) -- (4,0);

\draw[fill=black] (0,1) circle[radius=2pt];
\draw[->] (0,1) -- (0.75,1.5);
\draw (0.7,1.5) node[above]{\SMALL$m_1$};
\draw (0.75,1.5) -- (1.5,2);

\draw[fill=black] (1.5,2) circle[radius=2pt];
\draw[->] (1.5,2) -- (2.25,1.5);
\draw (2.3,1.5) node[above]{\SMALL$m_2$};
\draw (2.25,1.5) -- (3,1);
\draw[fill=black] (3,1) circle[radius=2pt];

\draw[->] (0,1) -- (1.5,1);
\draw (1.5,1) node[below]{\SMALL$m_3$};
\draw (1.5,1) -- (3,1);

\end{tikzpicture}
\end{equation}
for some $m_i > 0$. (We can also have parallel edges, but the resulting monoid will be isomorphic to one associated to a tropical type with no parallel edges; see Section~\ref{sec: unparalleled monoid}.)

On initial inspection, this seems to rule out many monoids: since there are only three edges, the resulting monoid must be generated by three elements, and there are certainly monoids of rank two requiring more than three generating elements.

However, this argument overlooks a crucial technical point. The tropical monoid $P_\uptau$ is obtained as the torification of the monoid $\N^{G_\uptau}/R_\uptau$. During this process, the saturation step in particular can increase the number of generators. In fact, we have:

\begin{theorem} \label{thm: get all 2D cones} Let $P$ be a toric monoid with $\rk P^{\gp}=2$. Then there exists a representable tropical type $\uptau$ of map to $\R_+$ with $P_\uptau \cong P$. Moreover $\uptau$ may be taken to be of the form \eqref{eqn: triangle tropical type}.
\end{theorem}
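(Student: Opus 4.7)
My plan is to build, for each rank-$2$ toric monoid $P$, a triangle tropical type whose single cycle relation encodes $P$. Write $P = \upsigma^\vee \cap M$ with $M \cong \Z^2$, and let $u_1, u_2 \in M$ be the primitive generators of the two extreme rays of $\upsigma^\vee$. I will pick a third lattice point $w$ in the interior of $\upsigma^\vee$ such that $\{u_1, u_2, w\}$ generates $M$ as an abelian group. Interiority of $w$ produces positive integers $m_1, m_2, m_3$ satisfying $m_1 u_1 + m_2 u_2 = m_3 w$, which we normalise so that $\gcd(m_1,m_2,m_3) = 1$. The tropical type $\uptau$ of form \eqref{eqn: triangle tropical type} with these slopes, and with $v_1$ as a root vertex sent to $0 \in \R_+$, is the candidate.

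The crux is the choice of $w$, which is where the rank-$2$ hypothesis enters essentially: the quotient $M/\langle u_1, u_2\rangle$ is always cyclic. Using primitivity of $u_1$, extend to a basis $(u_1, v)$ of $M$ and write $u_2 = au_1 + bv$ with $\gcd(a,b) = 1$; then $\langle u_1, u_2\rangle = \Z u_1 \oplus b\Z v$, so $M/\langle u_1, u_2\rangle \cong \Z/b$. Writing $w = \alpha u_1 + \beta v$, the condition $\gcd(\beta, b) = 1$ ensures $\bar w$ generates the cyclic quotient, while taking $\alpha$ suitably large places $w$ in the interior of $\upsigma^\vee$. A short direct computation shows the minimal positive integer solution $(m_1, m_2, m_3)$ to $m_1 u_1 + m_2 u_2 = m_3 w$ is then automatically coprime.

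With this setup the verification $P_\uptau = P$ is mechanical via the monogenic presentation of Definition~\ref{def: monogenic presentation}: $P_\uptau$ is the torification of $\N^3/\langle m_1 e_1 + m_2 e_2 = m_3 e_3\rangle$. Coprimality of the $m_i$ makes the groupification $\Z^3/\langle m_1 e_1 + m_2 e_2 - m_3 e_3\rangle$ torsion-free of rank $2$, and the assignment $e_1, e_2, e_3 \mapsto u_1, u_2, w$ descends to an isomorphism onto $M$ (surjective by the generation property, hence bijective between rank-$2$ free abelian groups). The image of $\N^3$ is the submonoid of $M$ generated by $u_1, u_2, w$, whose saturation is $\operatorname{cone}(u_1,u_2,w) \cap M = \upsigma^\vee \cap M = P$; no further sharpening is needed since $P$ is already sharp. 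Representability then follows from Lemma~\ref{lem: representable iff element of dual cone}: any point in the strict interior of $\upsigma$ pairs strictly positively with each of $u_1, u_2, w$, and hence with each edge length generator.

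The main obstacle is precisely the counting issue flagged in the discussion preceding the theorem: the triangle type has only three edge-length generators, whereas a rank-$2$ toric monoid can require arbitrarily many Hilbert basis elements. Saturation saves the day, but only provided we can arrange the subgroup generated in $M$ by the three images to be all of $M$, rather than a proper sublattice of finite index. That is exactly what the careful choice of $w$ via cyclicity of $M/\langle u_1, u_2\rangle$ achieves --- a phenomenon special to rank $2$, which helps explain why Question~\ref{question main} for $n \geq 2$ is substantially more subtle.
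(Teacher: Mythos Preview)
Your proposal is correct and follows essentially the same approach as the paper's proof. Both arguments pick the two primitive ray generators of $\upsigma^\vee$ together with a third lattice point chosen so that all three generate $M$, read off the unique primitive linear relation $m_1 u_1 + m_2 u_2 = m_3 w$, and feed the $m_i$ into the triangle type; the identification $P_\uptau = P$ then reduces to the observation that the submonoid $\N u_1 + \N u_2 + \N w \subseteq M$ has saturation $P$.

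The only real differences are cosmetic. The paper invokes Fulton's normal form to place $\upsigma^\vee$ in a standard position and then simply proposes $v_3 = (1,1)$, whereas you argue coordinate-free via the cyclicity of $M/\langle u_1,u_2\rangle$ to produce $w$. Your insistence that $w$ lie in the \emph{interior} of $\upsigma^\vee$ is a mild improvement: it guarantees $m_1,m_2,m_3 > 0$ directly, so the resulting type genuinely matches the form \eqref{eqn: triangle tropical type} (the paper's choice $v_3=(1,1)$ can land on the boundary in the degenerate case $P \cong \N^2$, giving $m_1=0$). Your explicit handling of coprimality of the $m_i$ is also a little more careful than the paper's ``unique nontrivial dependence'' phrasing, though both amount to taking the primitive generator of the rank-$1$ kernel of $\Z^3 \to M$.
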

\begin{proof} Following \cite[Section~2.2]{FultonToric}, $P$ is isomorphic to the monoid of lattice points $\upsigma^\vee \cap M$ for a rational polyhedral cone $\upsigma^\vee \subseteq M_{\R}$ of the form:
\[
\begin{tikzpicture}[scale=1.2]

\fill[blue!13!white] (2, 0.5) -- (4.2, 0.5) -- (4.2, 3.8);

\draw[lightyblue] (4.5,2) node{$\upsigma^\vee$};

\draw[black, smooth, ->] (2, 0.5) -- (4.5, 0.5);
\draw[black, smooth, ->] (2, 0.5) -- (4, 3.5);

\filldraw [black] (4, 3) circle (1.5pt);
\filldraw [black] (3.5, 3) circle (1.5pt);
\filldraw [black] (3, 3) circle (1.5pt);
\filldraw [black] (2.5, 3) circle (1.5pt);
\filldraw [black] (2, 3) circle (1.5pt);
\filldraw [black] (4, 2.5) circle (1.5pt);
\filldraw [black] (3.5, 2.5) circle (1.5pt);
\filldraw [black] (3, 2.5) circle (1.5pt);
\filldraw [black] (2.5, 2.5) circle (1.5pt);
\filldraw [black] (2, 2.5) circle (1.5pt);
\filldraw [black] (4, 2) circle (1.5pt);
\filldraw [black] (3.5, 2) circle (1.5pt);

\filldraw [black] (3, 2) circle (1.5pt);
\draw (3,2) node[above]{\SMALL$(k,m)$};

\filldraw [black] (2.5, 2) circle (1.5pt);
\filldraw [black] (2, 2) circle (1.5pt);
\filldraw [black] (4, 1.5) circle (1.5pt);
\filldraw [black] (3.5, 1.5) circle (1.5pt);
\filldraw [black] (3, 1.5) circle (1.5pt);
\filldraw [black] (2.5, 1.5) circle (1.5pt);
\filldraw [black] (2, 1.5) circle (1.5pt);
\filldraw [black] (4, 1) circle (1.5pt);
\filldraw [black] (3.5, 1) circle (1.5pt);
\filldraw [black] (3, 1) circle (1.5pt);
\filldraw [black] (2.5, 1) circle (1.5pt);
\filldraw [black] (2, 1) circle (1.5pt);

\filldraw [black] (4, 0.5) circle (1.5pt);
\filldraw [black] (3.5, 0.5) circle (1.5pt);
\filldraw [black] (3, 0.5) circle (1.5pt);

\filldraw [black] (2.5, 0.5) circle (1.5pt);
\draw (2.5,0.5) node[below]{\SMALL$(1,0)$};

\filldraw [black] (2, 0.5) circle (1.5pt);
\end{tikzpicture}
\]
This cone is generated by the primitive lattice vectors
\[ v_1 = (1,0), \qquad v_2=(k,m)\]
where $0 < k \leq m$ and $\gcd(k,m)=1$. Choose a lattice vector $v_3 \in \upsigma^\vee \cap M$ such that
\begin{equation} \label{eqn: vi generate Z2} \Z v_1 + \Z v_2 + \Z v_3 = \Z^2.\end{equation}
For example, taking $v_3 = (1,1)$ is sufficient. Consider the monoid
\[ Q \colonequals \N v_1 + \N v_2 + \N v_3 \subseteq \Z^2.\]
From $Q^{\gp} = \Z^2$ and $Q \otimes \Q_{\geqslant 0} = \upsigma^\vee \cap \Q^2$ we conclude $Q^{\sat} = P$. Now consider the surjection
\[ \Z^3 \to \Z^2 \]
given by $e_i \mapsto v_i$. The kernel gives a unique nontrivial $\Z$-linear dependence between the $v_i$. Since $v_3$ belongs to the cone generated by $v_1$ and $v_2$, this relation must take the form
\[ m_1 v_1 + m_2 v_2 = m_3 v_3\]
for some $m_i \geqslant 0$. This shows that
\[ Q = \N^3_{v_1 v_2 v_3} / (m_1 v_1 + m_2 v_2 = m_3 v_3).\]
Then the tropical type \eqref{eqn: triangle tropical type} gives rise to a monoid presentation $(G_\uptau | R_\uptau)$ with
\[ \N^{G_\uptau}/R_\uptau = Q.\]
The monoid $Q$ is integral, torsion-free and sharp. As such, torification is equivalent to saturation, giving
\[ P_{\uptau} = \left(\N^{G_\uptau}/R_\uptau\right)^{\sat}= Q^{\sat} = P \]
as required.
\end{proof}

\begin{example}\label{example get 2D cone} Recall the monoid $P$ from Example~\ref{example main construction}. There we constructed a tropical type of map to $\R^5_+$ whose tropical monoid was isomorphic to $P$. But Theorem~\ref{thm: get all 2D cones} shows that in fact $P$ can be obtained from a tropical type of map to $\R_+$ of the form \eqref{eqn: triangle tropical type}, in this case by taking $m_1=2, m_2=1, m_3=3$. Note that the saturation step in the construction of the tropical monoid is crucial.
	\end{example}

\subsection{Unparalleled monoid} \label{sec: unparalleled monoid} We see from Theorem~\ref{thm: get all 2D cones} that the minimal number of generators of $P_\uptau$ is not bounded in terms of $|E(\Gamma)|$. This is because saturation can increase the minimal number of generators. However, given a monoid
\[ Q \subseteq \Z^r \]
saturating $Q$ cannot increase the number of extremal rays of the cone $Q \otimes \R_+$. This is the key insight which leads to the proof of Theorem~\ref{thm: 7-gon inaccesible}, and which we formalise in this section.

Fix a representable, monogenic, expansive tropical type $\uptau$ and let $(G_\uptau | R_\uptau)$ be the associated monogenic presentation of Definition~\ref{def: monogenic presentation}.

\begin{definition} The \textbf{unparalleled presentation} associated to $\uptau$ is constructed from $(G_\uptau | R_\uptau)$ as follows. For every adjacent pair of vertices $v_1,v_2 \in V(\Gamma)$ define
\[ m_{v_1v_2} \colonequals \operatorname{lcm} \left\{ |m_{\vec{e}}| \suchthat e \in E(\Gamma) \text{ connecting } v_1 \text{ and } v_2 \right\} \in \N.\]
Introduce a new generator $\ell_{v_1 v_2}$ for each such pair, together with the new relations
\[ \dfrac{m_{v_1v_2}}{|m_{\vec{e}}|} \ell_{v_1v_2} = \ell_e \]
for each $e \in E(\Gamma)$ connecting $v_1$ and $v_2$. 
\end{definition}

\begin{definition} The \textbf{unparalleled monoid} $P_\uptau^{\nparallel}$ is the integralisation and torsion-free part of the monoid associated to the unparalleled presentation.
\end{definition}

\begin{lemma} \label{lem: properties of unparalleled monoid} There are natural inclusions
\[ P_\uptau^{\flat} \hookrightarrow P_{\uptau}^{\nparallel} \hookrightarrow P_\uptau \]
which induce isomorphisms on saturations. The unparalleled monoid admits a generating set of size at most
\begin{equation} \label{eqn: generators of unparalleled monoid} {|V(\Gamma)| \choose 2}.\end{equation}
\end{lemma}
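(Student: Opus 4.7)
The plan is to handle the two claims of the lemma in turn: first the generator bound, then the chain of inclusions and its behaviour under saturation.

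For the generator bound, I would argue directly from the defining relations of the unparalleled presentation: the relations $(m_{v_1v_2}/|m_{\vec e}|)\,\ell_{v_1v_2} = \ell_e$ express every original edge-length generator $\ell_e$ as a positive integer multiple of $\ell_{v_1v_2}$. Hence $P_\uptau^\nparallel$ is already generated, as a monoid, by the vertex-pair generators $\{\ell_{v_1v_2} : v_1,v_2 \text{ adjacent in } \Gamma\}$, of which there are at most $\binom{|V(\Gamma)|}{2}$.

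For the chain of inclusions, I would work at the level of groupifications, inside the common rational vector space $P_\uptau^{\mathrm{gp}} \otimes \Q$. By Lemma~\ref{lem: sharpening unnecessary}, representability identifies $P_\uptau^\flat$ canonically with $P_\uptau$, so the key task is to construct the intermediate map. I would define $P_\uptau^\nparallel \to P_\uptau$ by sending $\ell_e \mapsto \ell_e$ and $\ell_{v_1v_2} \mapsto x_{v_1v_2}$, where $x_{v_1v_2} \in P_\uptau$ is the unique element satisfying $k_e\,x_{v_1v_2} = \ell_e$ (setting $k_e := m_{v_1v_2}/|m_{\vec e}|$). Existence of $x_{v_1v_2}$ follows from $P_\uptau$ being saturated, and the remaining map $P_\uptau^\flat \hookrightarrow P_\uptau^\nparallel$ is obtained by identifying the saturation-generated elements of $P_\uptau^\flat$ with the vertex-pair generators of $P_\uptau^\nparallel$. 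Injectivity at each step is inherited from the universal property of the torsion-free groupification, since all three monoids embed in their groupifications.

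The saturation claim then follows because all three monoids span the same rational cone in $P_\uptau^{\mathrm{gp}} \otimes \Q$ (this cone is generated by the $\ell_e$ in every case), and $P_\uptau$ is already saturated. The main obstacle I foresee is the consistency check for $x_{v_1v_2}$ across parallel edges: one must verify that $\ell_e/k_e = \ell_{e'}/k_{e'}$ in $P_\uptau^{\mathrm{gp}}$ whenever $e,e'$ are two edges between $v_1$ and $v_2$. This reduces to the two-edge cycle relation $|m_{\vec e}|\,\ell_e = |m_{\vec{e'}}|\,\ell_{e'}$ combined with the LCM definition of $m_{v_1v_2}$. Once this compatibility is in place, the remainder of the argument is a formal diagram-chase through the steps of the torification procedure.
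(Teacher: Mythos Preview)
Your argument for the generator bound is correct and is exactly what the paper does: the new relations express each $\ell_e$ as a non-negative integer multiple of the corresponding $\ell_{v_1v_2}$, so the $\ell_{v_1v_2}$ suffice to generate.

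The problem is your reading of $P_\uptau^\flat$. You invoke Lemma~\ref{lem: sharpening unnecessary} to conclude $P_\uptau^\flat = P_\uptau$, but in the present section $P_\uptau^\flat$ is meant to denote the integralised, torsion-free monoid coming from the monogenic presentation \emph{before} saturation (parallel to how $P_\uptau^\nparallel$ is defined as only the integralisation and torsion-free part of the unparalleled presentation). Admittedly the paper overloads the symbol, but the proof of the lemma makes the intended meaning clear: it uses $(P_\uptau^\flat)^{\sat} = P_\uptau$, which would be vacuous under your reading. Under your interpretation the chain $P_\uptau = P_\uptau^\flat \hookrightarrow P_\uptau^\nparallel \hookrightarrow P_\uptau$ would force $P_\uptau^\nparallel = P_\uptau$, which is false in general since $P_\uptau^\nparallel$ is not saturated. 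Concretely, your proposed map $P_\uptau^\flat \hookrightarrow P_\uptau^\nparallel$, sending ``saturation-generated elements'' of $P_\uptau^\flat$ to the $\ell_{v_1v_2}$, cannot work: the saturated monoid $P_\uptau$ will typically contain elements not expressible in terms of the finitely many $\ell_{v_1v_2}$ inside the unsaturated $P_\uptau^\nparallel$.

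With the correct reading the argument is much shorter than what you propose. The unparalleled presentation is obtained from the monogenic one by adjoining generators $\ell_{v_1v_2}$ together with relations $k_e\,\ell_{v_1v_2} = \ell_e$; passing to integralised torsion-free quotients gives the natural map $P_\uptau^\flat \to P_\uptau^\nparallel$. It is injective because these relations identify no two elements of $\N^{E(\Gamma)}$, and it is of finite index because a positive multiple of each $\ell_{v_1v_2}$ already lies in the image. Finite index gives equal groupifications and hence equal saturations, and $(P_\uptau^\flat)^{\sat} = P_\uptau$ by definition (sharpening being redundant by representability). This yields both inclusions and the saturation statement at once; there is no need to construct $x_{v_1v_2} \in P_\uptau$ by hand or to verify the parallel-edge compatibility separately.
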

\begin{proof} The natural morphism $P_\uptau^{\flat} \hookrightarrow P_{\uptau}^{\nparallel}$ is of finite index and identifies groupifications. Hence it also identifies saturations, and by definition $(P_\uptau^\flat)^{\sat}=P_\uptau$.

The monogenic presentation has $G_\uptau = E(\Gamma)$. The unparalleled presentation then reduces this generating set down to a single parameter for each pair of adjacent vertices, from which we conclude \eqref{eqn: generators of unparalleled monoid}. Note that since $\uptau$ is representable and expansive, there are no loops.
\end{proof}

\subsection{The $7$-gon is inaccessible} We are finally in a position to prove the main result of this section.

\begin{theorem}[Theorem~\ref{thm: 7-gon inaccesible introduction}] \label{thm: 7-gon inaccesible} Fix $k \geqslant 7$ and consider an arbitrary convex $k$-gon
\[ S \subseteq \Z^2 \otimes \R\] 
whose vertices are lattice points. Let $\upsigma^\vee \subseteq \Z^3 \otimes \R$ be the cone over $S \times \{ 1\}$ and let
\[P \colonequals \upsigma^\vee \cap \Z^3\]
be the associated monoid of lattice points. Then there does not exist any representable tropical type $\uptau$ of map to $\R_+$ with $P_\uptau=P$.	
\end{theorem}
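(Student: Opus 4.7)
The plan is to argue by contradiction: suppose there exists a representable tropical type $\uptau$ of map to $\R_+$ with $P_\uptau = P$. First apply Proposition~\ref{prop: replace type by monogenic and expansive} to obtain a representable, monogenic, expansive tropical type $\widetilde{\uptau}$ with
\[ P = P_\uptau = P_{\widetilde{\uptau}} \oplus \N^{E_0(\Gamma)}. \]
I then rule out $\N$ direct summands of $P$: if $P \cong \N \oplus P'$ then $P'$ is a toric monoid of rank $2$, hence corresponds to a strictly convex cone in $\R^2$ with at most two extremal rays, so $P$ would admit at most three extremal rays. But $\upsigma^\vee$ has one extremal ray per vertex of $S$, giving $k \geqslant 7$ extremal rays, a contradiction. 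Therefore $E_0(\Gamma) = \emptyset$ and $P_{\widetilde{\uptau}} = P$.

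Next, Theorem~\ref{thm: monoid rank for monogenic and expansive} gives
\[ |V(\widetilde{\Gamma})| - 1 = \rk P^{\gp} = 3, \]
so $\widetilde{\Gamma}$ has exactly four vertices. Invoking Lemma~\ref{lem: properties of unparalleled monoid}, the associated unparalleled monoid $P_{\widetilde{\uptau}}^{\nparallel}$ admits a generating set of size at most $\binom{4}{2} = 6$ and satisfies $(P_{\widetilde{\uptau}}^{\nparallel})^{\sat} = P_{\widetilde{\uptau}} = P$; the latter equality holds because $P$ is toric and hence already saturated.

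The crux is that saturation of a submonoid of a lattice preserves the rational polyhedral cone it generates, so $P_{\widetilde{\uptau}}^{\nparallel}$ and $P$ generate the same cone $\upsigma^\vee$. Since any cone generated by a finite set has each of its extremal rays spanned by one of those generators, $\upsigma^\vee$ admits at most $6$ extremal rays. This contradicts the fact that $\upsigma^\vee$ is the cone over a $k$-gon with $k \geqslant 7$, completing the argument. The chief obstacle circumvented here is the potentially large inflation of minimal generators under saturation already observed in Example~\ref{example get 2D cone}; the point of passing to the unparalleled monoid is precisely that its generator count is controlled by pairs of adjacent vertices of $\widetilde{\Gamma}$ rather than by the minimal generators of $P_{\widetilde{\uptau}}$ itself, allowing us to trade "number of generators of $P$" for "number of extremal rays of $\upsigma^\vee$", which is rigid under saturation.
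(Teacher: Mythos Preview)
Your proof is correct and follows essentially the same approach as the paper: reduce to a monogenic expansive type, use Theorem~\ref{thm: monoid rank for monogenic and expansive} to pin down $|V(\widetilde{\Gamma})|=4$, then invoke Lemma~\ref{lem: properties of unparalleled monoid} to bound the generators of $P_{\widetilde{\uptau}}^{\nparallel}$ by $6$ and derive a contradiction from the $k\geqslant 7$ extremal rays of $\upsigma^\vee$. Your justification that $P$ admits no $\N$ summand is more explicit than the paper's, and your phrasing of the final step (extremal rays of a cone must lie among the generating rays) is arguably cleaner than the paper's formulation that $P_{\widetilde{\uptau}}^{\nparallel}$ ``must contain the primitive generators'' of the extremal rays.
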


\begin{proof} Suppose for a contradiction that such a tropical type $\uptau$ exists. Since $P$ contains no $\N$ factors we may assume by Proposition~\ref{prop: replace type by monogenic and expansive} that $\uptau$ is monogenic and expansive. By Theorem~\ref{thm: monoid rank for monogenic and expansive} we have $|V(\Gamma)|=4$. From this and Lemma~\ref{lem: properties of unparalleled monoid} we see that the unparalleled monoid is generated by at most $6$ elements. On the other hand we have
\[ P_\uptau^{\nparallel} \hookrightarrow P \]
which identifies saturations. Consequently, $P_\uptau^{\nparallel}$ must contain the primitive generators of the rays over the vertices of the $k$-gon $S$. Since these are extremal rays, it follows that
\[ P_\uptau^{\nparallel}\]
requires at least $k \geqslant 7$ generating elements, contradicting Lemma~\ref{lem: properties of unparalleled monoid}.
\end{proof}

\begin{corollary} With $P$ as above, the toric singularity $\Speck[P]$ does not appear in any moduli space $\Log(\Acal)$ of prestable logarithmic maps to the universal smooth pair.
\end{corollary}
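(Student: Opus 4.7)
The plan is to deduce the corollary directly from Theorem~\ref{thm: 7-gon inaccesible} by combining it with the local description of $\Log(\Acal)$ furnished by Proposition~\ref{prop: representability implies map to Artin fan exists}. I would argue by contradiction: suppose that some closed point $x \in \Log(\Acal)$ has local singularity type $\Spec \kfield[P]$, and derive that $P$ must be a tropical monoid of a representable tropical type of map to $\R_+$, contradicting Theorem~\ref{thm: 7-gon inaccesible}.

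More precisely, first I would recall that $\Log(\Acal)$ is logarithmically smooth over the trivial log point \cite[Proposition~1.6.1]{AbramovichWiseBirational}, so its \'etale-local singularities at a closed point $x$ are governed by the characteristic monoid of its logarithmic structure at $x$. This characteristic monoid is canonically identified with the tropical monoid $P_\uptau$, where $\uptau$ is the tropical type of the prestable logarithmic map to $\Acal$ parametrised by $x$; this identification is the content of the last paragraph of the proof of Proposition~\ref{prop: representability implies map to Artin fan exists}, together with \cite[Definition~1.20]{GrossSiebertLog} and Remark~\ref{rmk: toric monoid vs GS monoid}. In particular, the tropical type $\uptau$ that arises from an actual logarithmic map is automatically representable, since the associated tropicalisation provides an element of $\Hom(P_\uptau, \R_+)$ which is non-zero on the standard generators (Lemma~\ref{lem: representable iff element of dual cone}).

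Hence, if $\Spec \kfield[P]$ appears as the local singularity type at some point of $\Log(\Acal)$, then there must exist a representable tropical type $\uptau$ of map to $\R_+$ with $P_\uptau \cong P$. For $P$ as in Theorem~\ref{thm: 7-gon inaccesible} --- the cone over a convex $k$-gon with $k \geqslant 7$ --- no such $\uptau$ exists, giving the required contradiction.

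There is no real obstacle: the statement is a direct translation of Theorem~\ref{thm: 7-gon inaccesible} from the tropical to the algebraic side via Proposition~\ref{prop: representability implies map to Artin fan exists}. The only minor subtlety worth spelling out is the representability of the tropical type coming from a genuine logarithmic map, which is automatic as noted above.
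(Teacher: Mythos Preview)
Your proposal is correct and matches the paper's intended argument: the corollary is stated without proof in the paper, being an immediate consequence of Theorem~\ref{thm: 7-gon inaccesible} via the tropical-to-algebraic dictionary of Proposition~\ref{prop: representability implies map to Artin fan exists}. You have spelled out precisely the steps that make this translation rigorous, including the automatic representability of the tropical type arising from an actual logarithmic map.
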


\footnotesize
\bibliographystyle{alpha}
\bibliography{Bibliography.bib}\medskip

\scriptsize
\, \smallskip

\noindent Gabriel Corrigan, University of Glasgow, \href{mailto:g.corrigan.1@research.gla.ac.uk}{g.corrigan.1@research.gla.ac.uk}

\noindent Navid Nabijou, Queen Mary University of London, \href{mailto:n.nabijou@qmul.ac.uk}{n.nabijou@qmul.ac.uk}

\noindent Dan Simms, London School of Geometry and Number Theory, \href{mailto: daniel.simms.23@ucl.ac.uk}{daniel.simms.23@ucl.ac.uk}

\end{document}